\documentclass{article}

\usepackage{graphicx}
\usepackage{indentfirst}
\usepackage{amsmath,amsfonts,amsthm,amssymb}
\usepackage{mathrsfs}
\usepackage{amscd}

\def\co{\colon\thinspace}
\def\ond{\hspace{-2pt}\stackrel{\circ}{\nu}\hspace{-3pt}}
\DeclareMathAlphabet{\mathsfsl}{OT1}{cmss}{m}{sl}

\newtheorem{thm}{Theorem}[section]
\newtheorem{lem}[thm]{Lemma}

\newtheorem{prop}[thm]{Proposition}
\newtheorem*{thm*}{Theorem}

\theoremstyle{definition}
\newtheorem{defn}[thm]{Definition}

\begin{document}

\title{Non-separating spheres and twisted Heegaard Floer homology}

\author{{Yi NI}\\{\normalsize Department of Mathematics, MIT, Room 2-306}\\
{\normalsize 77 Massachusetts Avenue, Cambridge, MA
02139-4307}\\{\small\it Emai\/l\/:\quad\rm yni@math.mit.edu}}

\date{}
\maketitle

\begin{abstract}
If a $3$--manifold $Y$ contains a non-separating sphere, then some
twisted Heegaard Floer homology of $Y$ is zero. This simple fact
allows us to prove several results about Dehn surgery on knots in
such manifolds. Similar results have been proved for knots in
$L$--spaces.
\end{abstract}

\section{Introduction}

Heegaard Floer homology was introduced by Ozsv\'ath and Szab\'o
\cite{OSzAnn1}. For null-homologous knots, there is a filtered
version of Heegaard Floer homology, called knot Floer homology
\cite{OSzKnot,RasThesis}. Basically, if one knows the information
about the knot Floer homology of a knot, then one can compute the
Heegaard Floer homology of any manifold obtained by Dehn surgery
on the knot. However, in general the algebra involved here is too
complicated. In order to get useful information, people often
assume the ambient manifold has ``simple" Heegaard Floer homology,
namely, the ambient manifold is an $L$--space.

This paper is motivated by the observation that if the ambient
manifold contains a non-separating sphere, and if we use twisted
coefficients over a Novikov ring, then the Heegaard Floer homology
of the ambient manifold is even simpler: in this case the twisted
Heegaard Floer homology is zero. This observation allows us to
prove several results about null-homologous knots in such ambient
manifolds.

In order to state the first theorem, we introduce the concept of
``Property G".

Suppose $K$ is a null-homologous knot in a closed $3$--manifold
$Y$, then there is a canonical ``zero" slope on $K$. Let $Y_0(K)$
be the manifold obtained from $Y$ by the zero surgery on $K$. (In
general, let $Y_r(K)$ be the manifold obtained from $Y$ by
$r$--surgery on $K$.) Gabai proved the following result in
\cite{G3}:

\begin{thm*}[Gabai]
Let $K$ be a knot in $S^3$, $F$ be a minimal genus Seifert surface
for $K$. Let $\widehat F\subset S^3_0(K)$ be the surface obtained
by capping off $\partial F$ with a disk, then $\widehat F$ is
Thurston norm minimizing in $S^3_0(K)$. Moreover, if $S^3_0(K)$
fibers over the circle, then $K$ is a fibered knot.
\end{thm*}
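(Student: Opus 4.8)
The plan is to route everything through Heegaard Floer homology, exploiting the relationship between the knot Floer homology of $K$ and the plus-flavor Heegaard Floer homology of $S^3_0(K)$ together with the adjunction inequality. Write $g=g(K)$. The cases $g\le 1$ are special --- for $g=0$ the knot is the unknot, $S^3_0(K)=S^1\times S^2$, $\widehat F=S^2$; for $g=1$ the torus $\widehat F$ has $\chi_-=0$ so the norm statement is vacuous, while the fibering statement is the genus-one case settled by Ghiggini --- so assume $g\ge 2$. Label the $\mathrm{Spin}^c$ structures on $S^3_0(K)$ as $\mathfrak s_k$, $k\in\mathbb Z$, with $\langle c_1(\mathfrak s_k),[\widehat F]\rangle=2k$. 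Two inputs will be used as black boxes: \emph{(i)} Ozsv\'ath--Szab\'o's surgery formula identifies $HF^+(S^3_0(K),\mathfrak s_{g-1})$ with the top group $\widehat{HFK}(K,g)$ of the knot filtration, and their genus bounds show this group is nonzero; \emph{(ii)} a closed oriented $3$--manifold fibering over the circle with fiber of genus $\ge 2$ has $HF^+\cong\mathbb Z$ in the extremal $\mathrm{Spin}^c$ structures, and conversely $\widehat{HFK}(K,g)\cong\mathbb Z$ forces $K$ to be fibered.

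\emph{Norm minimization.} Suppose $\widehat F$ is not Thurston norm minimizing, so some embedded $\widehat F'$ with $[\widehat F']=[\widehat F]$ has $\chi_-(\widehat F')<\chi_-(\widehat F)=2g-2$. Since $HF^+(S^3_0(K),\mathfrak s_{g-1})\ne 0$ by \emph{(i)}, the adjunction inequality applies componentwise to $\widehat F'$: positive-genus components satisfy $|\langle c_1(\mathfrak s_{g-1}),\cdot\rangle|\le-\chi(\cdot)$, torus components contribute $0$, and sphere components are null-homologous (a nonzero-homology embedded sphere would, by the sphere case of the adjunction inequality with $g\ge 2$, already kill $HF^+(S^3_0(K),\mathfrak s_{g-1})$). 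Summing over components gives $2g-2=|\langle c_1(\mathfrak s_{g-1}),[\widehat F']\rangle|\le\chi_-(\widehat F')<2g-2$, a contradiction. Hence $\widehat F$ is norm minimizing, and in passing one sees that the Thurston norm of the generator of $H_2(S^3_0(K))$ equals $2g-2$.

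\emph{Fiberedness.} If $S^3_0(K)$ fibers over $S^1$ with fiber $\Sigma$, then $[\Sigma]$ generates $H_2(S^3_0(K))$ and $\Sigma$ is norm minimizing, so by the previous step $-\chi(\Sigma)=2g-2$; thus $\Sigma$ has genus $g$ and $\mathfrak s_{g-1}$ is an extremal $\mathrm{Spin}^c$ structure for $[\Sigma]$. By \emph{(ii)}, $HF^+(S^3_0(K),\mathfrak s_{g-1})\cong\mathbb Z$; by \emph{(i)}, $\widehat{HFK}(K,g)\cong\mathbb Z$; and by the converse half of \emph{(ii)}, $K$ is fibered.

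All the content sits in the two black boxes. The hard one is the converse in \emph{(ii)} --- that $\widehat{HFK}(K,g)\cong\mathbb Z$ implies $K$ fibered --- proved by Ghiggini in genus one and by the author in general; and, like the nonvanishing of $\widehat{HFK}(K,g)$ in \emph{(i)}, it is established via sutured-manifold theory and ultimately rests on Gabai's machinery, so this route re-imports Gabai's technology rather than bypassing it. The only genuinely independent argument I know is Gabai's: cut $S^3\setminus N(K)$ along $F$, build a sutured-manifold hierarchy, promote it to a taut foliation of $S^3_0(K)$ with $\widehat F$ a compact leaf (so $\widehat F$ is norm minimizing by Thurston), and in the fibered case use uniqueness of norm-minimizing surfaces in a fibered class to see $\widehat F$ is isotopic to the fiber, whence cutting $S^3\setminus N(K)$ along $F$ yields a product and $K$ is fibered. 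Constructing and controlling that hierarchy is the main obstacle along that route.
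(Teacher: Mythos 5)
The paper does not prove this theorem: it is displayed as background and attributed to Gabai with a citation to \cite{G3}, where it is established via sutured manifold hierarchies and taut foliations. So there is no internal proof to compare your argument against. That said, your Heegaard Floer route is correct as written (for $g\geq 2$; your deferrals for $g=0,1$ are fine), and it is essentially the same strategy the paper itself uses to prove its \emph{generalization} of Gabai's theorem, namely Theorem~\ref{thm:PropG}: identify $\widehat{HFK}(K,g)$ with the extremal $HF^+$ group of the zero-surgery (Lemma~\ref{lem:0surgery} and Proposition~\ref{prop:LargeSurg} are twisted analogues of your item~(i)), push adjunction and the nonvanishing of the top group to obtain tautness of $\widehat F$, and invoke the fiberedness-detection theorem (Theorem~\ref{thm:TwistFibre}, your item~(ii)) for the fibering statement. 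The paper works with twisted coefficients over a Novikov ring precisely because the ambient manifolds there contain nonseparating spheres and ordinary $HF^+$ is insensitive to the relevant structure; in the $S^3$ case you treat, untwisted coefficients suffice and the argument is cleaner. Your closing caveat is apt and worth retaining: both the nonvanishing of $\widehat{HFK}(K,g)$ and the Ghiggini--Ni fiberedness criterion are themselves proved with sutured manifold technology, so the Floer-theoretic proof re-imports the very machinery Gabai invented for \cite{G3} rather than supplying a logically independent argument. One local check worth making explicit in a final write-up is the sphere case of the adjunction step: a homologically essential sphere $Z$ in $S^3_0(K)$ would have $[Z]=m[\widehat F]$ with $m\neq 0$, and then nonvanishing of $HF^+(S^3_0(K),\mathfrak s_{g-1})$ together with the sphere adjunction forces $m(2g-2)=0$, which is the contradiction you need to discard sphere components; this is implicit in your sketch but should be spelled out.
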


Our notion of ``Property G" is motivated by the above theorem.

\begin{defn}
Suppose $K$ is a null-homologous knot in a closed $3$--manifold
$Y$. An oriented surface $F\subset Y$ is a {\it Seifert-like
surface} for $K$, if $\partial F=K$. When $F$ is connected, we say
that $F$ is a {\it Seifert surface} for $K$. We also view a
Seifert-like surface as a proper surface in $Y-\ond(K)$.
\end{defn}

\begin{defn}
Suppose $M$ is a compact $3$--manifold, a properly embedded
surface $S\subset M$ is {\it taut} if $x(S)=x([S])$ in
$H_2(M,\partial S)$, $S$ is incompressible,
 and no proper subsurface of
$S$ is null-homologous. Here $x(\cdot)$ is the Thurston norm.
\end{defn}

\begin{defn}
Suppose $K$ is a null-homologous knot in a closed $3$--manifold
$Y$. We say $K$ has {\it Property G}, if the following conditions
hold:

\noindent(G1) any taut Seifert-like surface for $K$ extends to a
taut surface in $Y_0(K)$ after attaching a disk to its boundary;

\noindent(G2) if $Y_0(K)$ fibers over $S^1$, such that the
homology class of the fiber is the extension of the homology class
of a Seifert surface $F$ for $K$, then $K$ is a fibered knot, and
the homology class of the fiber is $[F]$.

If the first (or second) condition holds, then we say that $K$ has
{\it Property G1 ({\rm or} G2)}.
\end{defn}

It is easy to construct knots that violate Property G. However, if
we make some assumption on $Y$ or $K$, then we can get Property G.
For example, one can show that non-prime knots have Property G. In
\cite{G3}, Gabai proved that if $K$ is a null-homologous knot in a
reducible manifold $Y$, such that $H_1(Y)$ is torsion-free and
$Y-K$ is irreducible, then $K$ has Property G. This result has
 overlap with our Theorem~\ref{thm:PropG}. Moreover, using
Heegaard Floer homology, we can show that if
$HF_{\mathrm{red}}(Y)=0$ then $K$ has Property G. (For Property
G2, the proof can be found in \cite{NiFibred,AiNi}. The proof for
Property G1 is similar.)

The first main theorem in this paper is Property G for knots in
manifolds that contain non-separating spheres.

\begin{thm}\label{thm:PropG}
Suppose $Y$ is a closed $3$--manifold that contains a
non-separating sphere $S$, $K$ is a null-homologous knot in $Y$,
such that $Y-K$ is irreducible. Then $K$ has Property G.
\end{thm}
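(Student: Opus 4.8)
The plan is to distill the non-separating sphere into a single vanishing statement in Heegaard Floer homology and then feed it into the (essentially known) mechanism that produces Property~G once the ambient Floer homology is sufficiently simple, now run with Novikov-twisted coefficients. \textbf{Step 1: a twisted Floer homology of $Y$ is zero.} A non-separating sphere $S\subset Y$ splits off an $S^1\times S^2$ summand --- cutting along $S$ and capping the two new boundary spheres with balls gives a closed $Y_1$ with $Y\cong Y_1\#(S^1\times S^2)$. Let $\gamma$ be a loop meeting $S$ once, put $\omega=\mathrm{PD}[\gamma]\in H^2(Y;\mathbb Z)$, and let $\Lambda$ be the associated Novikov ring, in which $1-T$ is a unit. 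In a genus-one Heegaard diagram for $S^1\times S^2$ the two intersection points are joined by a pair of bigons differing by the generating periodic domain, on which $\omega$ evaluates as $\pm1$; so the $\omega$-twisted differential sends one generator to $\pm(1-T)$ times the other, giving $\underline{HF}^\circ(S^1\times S^2;\Lambda)=0$ in all flavors. By the connected-sum formula for twisted coefficients, $\underline{CF}^-(Y;\Lambda)$ is then a tensor product with a bounded acyclic free factor, so $\underline{HF}^\circ(Y;\Lambda)=0$ for $\circ\in\{\widehat{\phantom{x}},+,-,\infty\}$ and every $\mathrm{Spin}^c$ structure; in particular $\underline{HF}^{\mathrm{red}}(Y;\Lambda)=\underline{HF}^\infty(Y;\Lambda)=0$, so for these coefficients $Y$ is even simpler than an $L$-space.

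\textbf{Step 2: Property~G1.} Let $F$ be a taut Seifert-like surface; one reduces to $F$ a Seifert surface of genus $g\ge1$ (closed homologically essential components, and the case $g=0$, handled directly). With $M=Y-\ond(K)$, cap off to get $\widehat F\subset Y_0(K)$, representing $h$ with $x(\widehat F)=2g-2$. Decomposing $M$ (with meridional sutures) along $F$ is a taut decomposition (Gabai), so its sutured Floer homology is nonzero (Juh\'asz) and, by the surface-decomposition formula, equals the top Alexander grading $\widehat{HFK}(Y,K,g)$; hence $\widehat{HFK}(Y,K,g)\ne0$, likewise over $\Lambda$. Because $\underline{HF}^+(Y;\Lambda)=0$, the twisted large-surgery (mapping-cone) formula degenerates and identifies $\underline{HF}^+\big(Y_0(K),\mathfrak s;\Lambda\big)$, for $\mathfrak s$ with $\langle c_1(\mathfrak s),h\rangle=2g-2$, with a group built from $\underline{\widehat{HFK}}(Y,K,g;\Lambda)$, hence nonzero. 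By the twisted genus bound of Ozsv\'ath--Szab\'o, $x(h)\ge\langle c_1(\mathfrak s),h\rangle=2g-2$; combined with $x(h)\le x(\widehat F)=2g-2$ this gives $x(h)=x(\widehat F)$. Incompressibility of $\widehat F$ and the absence of a null-homologous proper subsurface are inherited from $F$ (such a disk or subsurface, made transverse to the surgery solid torus, is pushed into $M$). So $\widehat F$ is taut.

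\textbf{Step 3: Property~G2.} Suppose $Y_0(K)$ fibers over $S^1$ with fiber in the class $h=[\widehat F]$ of an extended Seifert surface $F$; take $g=g(F)\ge1$. A fibered $3$-manifold has $\underline{\widehat{HF}}$ of rank one over $\Lambda$ in the $\mathrm{Spin}^c$ structure extremal for the fiber (Ozsv\'ath--Szab\'o, Ni). Running the Step~2 identification in that extremal structure, and again using $\underline{HF}^+(Y;\Lambda)=0$, gives $\mathrm{rk}_\Lambda\,\underline{\widehat{HFK}}(Y,K,g;\Lambda)=1$, hence $\widehat{HFK}(Y,K,g)\cong\mathbb Z$. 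By the knot-Floer detection of fibered knots, in the form valid once $\underline{HF}^{\mathrm{red}}(Y;\Lambda)=0$ --- the $\mathrm{Spin}^c$ arguments of \cite{NiFibred,AiNi} carry over verbatim to Novikov coefficients --- $K$ is a fibered knot with fiber $F$, so the fiber represents $[F]$. This is Property~G2.

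\textbf{Main obstacle.} Step~1 is the genuinely new input and is formal. The work is in Steps~2--3: setting up the Novikov-twisted large-surgery/mapping-cone formulas and the twisted genus- and fiberedness-detection theorems, and checking that $\underline{HF}^+(Y;\Lambda)=0$ kills exactly the correction terms that would otherwise obstruct these identifications when $Y$ is not an $L$-space. Since the untwisted versions are known for $HF^{\mathrm{red}}(Y)=0$, the plan is to replay those proofs over $\Lambda$; the point to watch is that the nonvanishing inputs (Juh\'asz's $SFH\ne0$ and rank-one detection) survive base change to $\Lambda$, which they do because the sutured complexes involved are finitely generated and free over the relevant group rings.
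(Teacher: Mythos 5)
Your overall skeleton matches the paper's: translate the non-separating sphere into vanishing of twisted Floer homology of $Y$ over the Novikov field $\Lambda$, use a twisted zero-surgery/large-surgery mechanism to pass between $\underline{\widehat{HFK}}(Y,K)$ and $\underline{HF^+}(Y_0(K))$ at the top grading, and finish with a twisted genus bound for G1 and twisted fiberedness detection for G2. That part is fine, and Steps~1 and~3 are essentially what the paper does (the rank-one statement for a fibered $Y_0(K)$ is exactly the Ai--Peters input the paper cites).

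The genuine gap is in Step~2, at the sentence ``hence $\widehat{HFK}(Y,K,g)\ne0$, likewise over $\Lambda$,'' and again in your closing ``main obstacle'' paragraph where you claim the nonvanishing ``survives base change to $\Lambda$\ldots because the sutured complexes involved are finitely generated and free.'' That reasoning is false, and it is false precisely because of the phenomenon you exploited in Step~1. Finite generation and freeness over $\mathbb Z[H^1(Y;\mathbb Z)]$ do not force the homology to survive tensoring with $\Lambda$: the twisted complex of $S^1\times S^2$ is one free rank-one summand mapping to another by $1-T$, which has nonzero homology over $\mathbb Z$ and over $\mathbb Z[T^{\pm1}]$, but is acyclic over $\Lambda$ because $1-T$ becomes a unit. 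The untwisted (or integral group-ring) nonvanishing coming from Gabai/Juh\'asz is therefore not an input you are entitled to port to $\Lambda$, and Juh\'asz's nonvanishing theorem is only an untwisted statement in the first place. So the key nonvanishing $\underline{\widehat{HFK}}(Y,K,\omega,[F],g;\Lambda)\ne0$ for a suitable $\omega$ with $\omega\cdot S\ne0$ is simply asserted, not proved, and this assertion is in fact the central technical content of the paper.

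In the paper that nonvanishing is Theorem~\ref{thm:TwistGenus}, whose proof is substantial: it builds a smooth taut foliation on $Y_0(K\#J)$ for a fibered stabilizer $J$ via Gabai's work, invokes Eliashberg--Thurston to get a filling symplectic form whose cohomology class can be perturbed freely (Lemma~\ref{lem:SymplFil}), extracts a whole open cone $U$ of $1$-cycles $\omega$ for which $\underline{HF^+}$ is nonzero (Theorem~\ref{thm:TwistNorm}), and then uses a cut-and-reglue argument (Lemma~\ref{lem:CutReglue}) together with a connected-sum formula to transport the nonvanishing back to $K$ while simultaneously guaranteeing $\omega$ can be chosen with $\omega\cdot S\ne0$. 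None of this is captured by a base-change argument; it is where the real work happens, and your proposal skips it.
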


The next result is about cosmetic surgeries on the above knots,
which is an analogue of \cite[Theorem~9.7]{OSzRatSurg}.

\begin{thm}\label{thm:Cosmetic}
Suppose $Y$ is a closed $3$--manifold that contains a
non-separating sphere $S$, $K$ is a null-homologous knot in $Y$,
such that $Y-K$ is irreducible. If two rational numbers $r,s$
satisfy that $Y_{r}(K)\cong\pm Y_{s}(K)$, then $r=\pm s$.
\end{thm}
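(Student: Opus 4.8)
The plan is to translate the hypothesis into a dimension count for a twisted Heegaard Floer homology group. For a closed oriented $3$--manifold $M$ write $\mathbb F_M=\mathrm{Frac}\,\mathbb Z[H_1(M;\mathbb Z)/\mathrm{Tors}]$ and set $\mathrm{rk}(M)=\dim_{\mathbb F_M}\underline{\widehat{HF}}(M;\mathbb F_M)$, the dimension of the fully twisted hat Floer homology after extending coefficients to the field $\mathbb F_M$. This is a finite diffeomorphism invariant, unchanged by orientation reversal (since $\underline{\widehat{HF}}(-M)$ is dual to $\underline{\widehat{HF}}(M)$ over the same field), and the vanishing statement for $Y$ gives $\mathrm{rk}(Y)=0$: writing $Y\cong Y'\#(S^1\times S^2)$ because $S$ is non-separating, the K\"unneth formula reduces to the $S^1\times S^2$ factor, where the element $t-1$ attached to the generator dual to $S$ becomes a unit in $\mathbb F_Y$. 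Before using Floer homology I would dispose of the homological content: since $K$ is null-homologous, $H_1(Y_{p/q}(K))\cong H_1(Y)\oplus\mathbb Z/p$ for $r=p/q$ in lowest terms with $p\neq0$, while $H_1(Y_0(K))\cong H_1(Y)\oplus\mathbb Z$; comparing first Betti numbers and torsion orders in $Y_r(K)\cong\pm Y_s(K)$ forces the numerators of $r$ and $s$ to agree, the zero-numerator case giving $r=s=0$ at once. So it suffices to treat $r=p/q$, $s=p/q'$ with $p\geq1$. Moreover $Y-K$ irreducible forces $K$ not to bound an embedded disk (an innermost-disk argument would otherwise isotope $S$ off $K$, making $Y-K$ reducible), so the Seifert genus is $g:=g(K)\geq1$.

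The heart of the argument is that the twisted rational surgery mapping cone formula collapses. Apply the twisted version of Ozsv\'ath--Szab\'o's rational surgery formula (the passage to twisted field coefficients being routine; compare \cite{NiFibred,AiNi}) to compute $\underline{\widehat{HF}}(Y_{p/q}(K);\mathbb F)$, where $\mathbb F:=\mathbb F_{Y_{p/q}(K)}=\mathrm{Frac}\,\mathbb Z[H_1(Y)/\mathrm{Tors}]$, a field independent of $p\geq1$ and of $q$. The ``$B$''--summands of this mapping cone are all copies of $\underline{\widehat{HF}}(Y;\mathbb F)=0$, so the cone collapses to a direct sum of the ``$A$''--complexes $\widehat{\mathbb A}_j$ built from the twisted knot Floer complex of $K$ (specialised along $\mu\mapsto1$, since the meridian $\mu$ is null-homologous in $Y$): up to grading shifts, $\underline{\widehat{HF}}(Y_{p/q}(K);\mathbb F)\cong\bigoplus_{n\in\mathbb Z}\underline{\widehat{HF}}(\widehat{\mathbb A}_{\lfloor n/q\rfloor};\mathbb F)$. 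Since $\widehat{\mathbb A}_j$ has vanishing homology once $|j|\geq g$ (there the corresponding map to $\widehat{CF}(Y)$ is a quasi-isomorphism), only the finitely many $\widehat{\mathbb A}_j$ with $|j|<g$ contribute, and each appears exactly $|q|$ times in the direct sum. Hence $\mathrm{rk}(Y_{p/q}(K))=|q|\cdot N_K$, where $N_K=\sum_{|j|<g}\dim_{\mathbb F}\underline{\widehat{HF}}(\widehat{\mathbb A}_j;\mathbb F)$ depends only on the pair $(Y,K)$.

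It remains to see $N_K>0$ and conclude. Running the same collapse for the $0$--surgery gives $\underline{\widehat{HF}}(Y_0(K),\mathfrak s_i;\mathbb F_{Y_0(K)})\cong\underline{\widehat{HF}}(\widehat{\mathbb A}_i;\mathbb F_{Y_0(K)})$ for each $i$ (here $\mathfrak s_i$ is the $\mathrm{Spin}^c$ structure with $\langle c_1(\mathfrak s_i),[\widehat F]\rangle=2i$). By Theorem~\ref{thm:PropG}, $K$ has Property G, so the capped Seifert surface $\widehat F\subset Y_0(K)$ is taut with $x([\widehat F])=2g-2$; the sharp adjunction inequality for twisted Heegaard Floer homology then gives $\underline{\widehat{HF}}(Y_0(K),\mathfrak s_{g-1};\mathbb F_{Y_0(K)})\neq0$, so $\widehat{\mathbb A}_{g-1}$ has non-zero homology, and, since the rank of the homology of a complex of free modules can only jump up under the specialisation $\mu\mapsto1$, also $\dim_{\mathbb F}\underline{\widehat{HF}}(\widehat{\mathbb A}_{g-1};\mathbb F)>0$. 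As $|g-1|<g$ this forces $N_K>0$. Finally $Y_{p/q}(K)\cong\pm Y_{p/q'}(K)$ forces $\mathrm{rk}(Y_{p/q}(K))=\mathrm{rk}(Y_{p/q'}(K))$, i.e.\ $|q|N_K=|q'|N_K$, whence $|q|=|q'|$; together with equality of the numerators this says precisely $r=\pm s$.

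The step I expect to be most delicate is the bookkeeping of the twisted coefficient systems and of the internal grading shifts in the mapping cone: one must fix a single twisted setup that simultaneously kills $\underline{HF}(Y)$ (so the ``$B$''--terms drop for every slope, and the collapse above is legitimate) and is rich enough on $Y_0(K)$ for the sharp adjunction inequality to detect $x([\widehat F])$, and one must keep track of how the twistings and those grading shifts behave under the various surgeries and under the diffeomorphism $Y_{p/q}(K)\cong\pm Y_{p/q'}(K)$. The positivity $N_K>0$ is the place where Property G is genuinely needed: with twisted coefficients the usual ``$\widehat{HF}$ contains a distinguished non-zero class'' is unavailable, so the non-triviality of the top part of the twisted knot Floer homology of $K$ has to be imported through Gabai's tautness of $\widehat F$ in $Y_0(K)$.
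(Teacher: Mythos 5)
Your argument is correct in substance, but it reaches the conclusion by a genuinely different route than the paper, and the difference is worth flagging. The paper twists by a specific $1$--cycle $\omega$ over the Novikov field $\Lambda$ and proves (Proposition~\ref{prop:pqSurg}) that $\mathrm{rank}_{\Lambda}\underline{\widehat{HF}}(Y_{p/q}(K),\omega;\Lambda)=qR(Y,K,\omega)$. This rank depends on $\omega$, which is \emph{not} intrinsic to the surgered manifold; a homeomorphism $f\colon Y_{p_1/q_1}\to\pm Y_{p_2/q_2}$ transports $\omega$ to $f_*\omega$, which changes the twisting. The paper therefore has to run an iteration: it chooses $\omega$ so that every $f^n_*\omega$ still pairs non-trivially with $S$, obtains $\mathrm{rank}_{\Lambda}\underline{\widehat{HF}}(Y_{p_1/q_1},f^n_*\omega;\Lambda)=(q_1/q_2)^n\,\mathrm{rank}_{\Lambda}\underline{\widehat{HF}}(Y_{p_1/q_1},\omega;\Lambda)$, and derives a contradiction from integrality when $q_1\neq q_2$. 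Your substitution of $\mathbb F_M=\mathrm{Frac}\,\mathbb Z[H_1(M)/\mathrm{Tors}]$ for $\Lambda$ gives a canonically defined, orientation-insensitive homeomorphism invariant $\mathrm{rk}(M)$, so no iteration is needed and you can simply equate ranks. This is not a coincidence: for $\omega$ with rationally independent periods the map $\mathbb Z[H^1(M)]\to\Lambda$ is injective and factors through the field extension $\mathbb F_M\hookrightarrow\Lambda$, so $\mathrm{rank}_{\Lambda}$ for generic $\omega$ coincides with $\dim_{\mathbb F_M}$; your invariant is exactly the paper's generic rank, made visibly intrinsic. The cost is that you invoke the twisted rational surgery mapping cone directly and wave through the step ``the passage to twisted field coefficients is routine,'' which is precisely the part the paper chooses \emph{not} to invoke; it instead realizes $Y_{p/q}(K)$ as a Morse surgery on $K\#O_{q/r}\subset Y\#L(q,r)$ and uses the large-surgery isomorphism (Proposition~\ref{prop:LargeSurg}) together with a Spin$^c$/K\"unneth count (Lemmas~\ref{lem:MorseSurg}, \ref{lem:SpinConn}, Proposition~\ref{prop:Kunneth}) to get the factor-of-$q$ rank formula from scratch. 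Two smaller notes: your derivation of $N_K>0$ via semicontinuity of homology rank under the specialization $T^\mu\mapsto 1$ is correct and plays the role of Lemma~\ref{lem:0surgery} together with Theorem~\ref{thm:TwistGenus} in the paper; and while you route the positivity through Theorem~\ref{thm:PropG}, the paper appeals directly to Theorem~\ref{thm:TwistGenus}, which is the common engine behind both.
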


The paper is organized as follows. In Section~2 we define a
version of twisted Heegaard Floer homology. In Section~3 we
collect some properties of twisted Heegaard Floer homology,
especially the nontriviality results. Sections~4 and 5 are devoted
to the proof of our main theorems.

\vspace{5pt}\noindent{\bf Acknowledgements.}\quad We are very
grateful to David Gabai and Cameron Gordon for helpful
communications. The author is partially supported by an AIM
Five-Year Fellowship and NSF grant number DMS-0805807.

\section{Preliminaries on twisted Heegaard Floer homology}

In this section, we will set up the version of twisted Heegaard
Floer homology we need. Our approach is similar to the sketch in
\cite{NiClosed}. More general constructions can be found in
\cite{OSzAnn2,JM}.

\subsection{Twisted chain complexes}

Let $Y$ be a closed, oriented $3$--manifold.
$(\Sigma,\mbox{\boldmath$\alpha$},\mbox{\boldmath$\beta$},z)$ is a
Heegaard diagram for $Y$. We always assume the diagram satisfies a
certain admissibility condition so that the Heegaard Floer
invariants we are considering are well-defined (see \cite{OSzAnn1}
for more details).

Let
$$\Lambda=\left\{\sum_{r \in \mathbb{R}}a_r T^r\bigg|a_r\in\mathbb{R},
  \;\#\{a_r|a_r\ne0, r \leq c\}<\infty\quad \text{\rm for any $c\in\mathbb R$}
  \right\}$$
be the {\it universal Novikov ring}, which is actually a field.

Let $\omega$ be a $1$--cycle on $\Sigma$, such that it is in
general position with the $\alpha$-- and $\beta$--curves. Namely,
$\omega=\sum k_ic_i$, where $k_i\in\mathbb R$, each $c_i$ is an
immersed closed oriented curve on $\Sigma$, such that $c_i$ is
transverse to the $\alpha$-- and $\beta$--curves, and $c_i$ does
not contain any intersection point of $\alpha$-- and
$\beta$--curves. We also regard $\omega$ as a $1$--cycle in $Y$.

Let $\underline{CF^{\infty}}(Y,\omega;\Lambda)$ be the
$\Lambda$--module freely generated by $[\mathbf x,i]$, where
$\mathbf x\in\mathbb T_{\alpha}\cap\mathbb T_{\beta}$,
$i\in\mathbb Z$. If $\phi$ is a topological Whitney disk
connecting $\mathbf x$ to $\mathbf y$, let
$\partial_{\alpha}\phi=(\partial\phi)\cap\mathbb T_{\alpha}$. We
can also regard $\partial_{\alpha}\phi$ as a multi-arc that lies
on $\Sigma$ and connects $\mathbf x$ to $\mathbf y$. We define
$$A(\phi)=(\partial_{\alpha}\phi)\cdot\omega.$$

 Let
$$\underline{\partial}\co \underline{CF^{\infty}}(Y,\omega;\Lambda)\to
\underline{CF^{\infty}}(Y,\omega;\Lambda)$$ be the boundary map
defined by
$$\underline{\partial}\:[\mathbf x,i]=\sum_{\mathbf y}\sum_{\stackrel{\scriptstyle\phi\in\pi_2(\mathbf x,\mathbf y)}
{\mu(\phi)=1}}\#\big(\mathcal M(\phi)/\mathbb R\big)
T^{A(\phi)}[\mathbf y,i-n_z(\phi)].$$

\begin{prop}\label{prop:TwistInv}
If $\omega_1,\omega_2$ are two $1$--cycles which are homologous in
$Y$, then we have the isomorphism of chain complexes
$$\underline{CF^{\infty}}(Y,\omega_1;\Lambda)\cong\underline{CF^{\infty}}(Y,\omega_2;\Lambda).$$
In particular, when $\omega$ is null-homologous in $Y$, the
coefficients are ``untwisted".
\end{prop}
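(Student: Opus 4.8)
The plan is to compare the two twisted complexes by constructing an explicit chain isomorphism, using the freedom to rescale the generators $[\mathbf x,i]$ by suitable powers of $T$. First I would reduce to the case where $\omega_1$ and $\omega_2$ differ by the boundary of a $2$--chain supported on $\Sigma$: since $\omega_1$ and $\omega_2$ are homologous $1$--cycles in $Y$, and $Y$ is built from the handlebodies $U_\alpha,U_\beta$, the difference $\omega_1-\omega_2$ bounds a $2$--chain $B$ in $Y$; pushing $B$ around using the handles, one writes $\omega_1-\omega_2 = \partial B + \sum m_j \alpha_j + \sum n_j \beta_j$ (up to homology), where the curves $\alpha_j,\beta_j$ are the attaching circles, because these classes generate $H_1$ of the handlebodies. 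Actually the cleanest route is to first treat the case $\omega_1-\omega_2=\partial(\text{2-chain on }\Sigma)$ directly, and then separately absorb the $\alpha$-- and $\beta$--curve contributions, since adding a copy of an $\alpha$-- or $\beta$--curve to $\omega$ changes $A(\phi)$ in a controlled way.

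The key computation is this: if $\omega_1-\omega_2=\partial P$ for a $2$--chain $P$ on $\Sigma$, then for any Whitney disk $\phi\in\pi_2(\mathbf x,\mathbf y)$ we have $A_{\omega_1}(\phi)-A_{\omega_2}(\phi) = (\partial_\alpha\phi)\cdot\partial P$, and by a Stokes-type argument on $\Sigma$ this equals a quantity of the form $f(\mathbf x)-f(\mathbf y)$ depending only on the endpoints, where $f(\mathbf x)=\sum_{x\in\mathbf x}(\text{local multiplicity of }P)$-type data evaluated at the $\alpha$--coordinates of $\mathbf x$ (more precisely, $P$ restricted near each intersection point contributes; one uses that $\partial_\alpha\phi$ is a $1$--chain on $\Sigma$ with $\partial(\partial_\alpha\phi) = \mathbf y - \mathbf x$ on the $\alpha$--curves). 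Granting this, the map sending $[\mathbf x,i]\mapsto T^{f(\mathbf x)}[\mathbf x,i]$ intertwines $\underline\partial_{\omega_1}$ and $\underline\partial_{\omega_2}$, because each differential coefficient $T^{A(\phi)}$ gets multiplied by $T^{f(\mathbf x)-f(\mathbf y)}$, which is exactly the discrepancy. For the $\alpha$--curve and $\beta$--curve corrections: adding $\alpha_j$ to $\omega$ changes $A(\phi)$ by $(\partial_\alpha\phi)\cdot\alpha_j$, which again depends only on the endpoints (since $\partial_\alpha\phi$ lies on the $\alpha$--curves, its intersection number with $\alpha_j$ is computed from the local picture at $\mathbf x$ and $\mathbf y$), so the same rescaling trick applies; adding $\beta_j$ gives $(\partial_\alpha\phi)\cdot\beta_j$, and one checks $\partial_\alpha\phi\cdot\beta_j + \partial_\beta\phi\cdot\beta_j$ is a boundary term while $\partial_\beta\phi\cdot\beta_j$ depends only on endpoints, so after combining, the net change is again of the form $g(\mathbf x)-g(\mathbf y)$.

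The main obstacle is bookkeeping: making precise the claim that $(\partial_\alpha\phi)\cdot(\text{2-chain or curve})$ is a ``coboundary'' on the set of intersection points $\mathbb T_\alpha\cap\mathbb T_\beta$, i.e. that it factors through a function of $\mathbf x$ minus the same function of $\mathbf y$ and is additive under concatenation of disks. This is a standard fact in the Heegaard Floer literature (it is the same mechanism by which the ordinary twisted differential is shown to be independent of representative cycles, cf.\ \cite{OSzAnn2}), so I would state the additivity of $A(\phi)$ under splicing, then define $f$ on a chosen basepoint generator in each $\mathrm{Spin}^c$ class and extend by the formula $f(\mathbf y) = f(\mathbf x) - (\partial_\alpha\phi)\cdot\partial P$ for any $\phi\in\pi_2(\mathbf x,\mathbf y)$, checking well-definedness using that periodic domains have $(\partial_\alpha(\text{periodic domain}))\cdot\partial P = 0$ (because $\partial P = \omega_1-\omega_2$ is null-homologous in $Y$ relative to nothing — it is a cycle homologous to $0$, so it pairs trivially with the classes of periodic domains). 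The last statement of the proposition, that null-homologous $\omega$ gives the untwisted theory, is then immediate by taking $\omega_2=0$.
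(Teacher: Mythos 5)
Your strategy is the same as the paper's: reduce modulo $\alpha$-- and $\beta$--curve classes so that the difference becomes the boundary of a $2$--chain $D$ on $\Sigma$, then rescale the generators by a power of $T$ determined by $D$. The paper's rescaling is the closed formula $\mathbf x\mapsto T^{D\cdot\mathbf x}\mathbf x$ (cap product of $D$ with the $0$--chain $\sum x_i$), which makes the chain-map check immediate and sidesteps your basepoint-propagation and periodic-domain well-definedness discussion.

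The one place your bookkeeping is off is the treatment of the $\alpha$-- and $\beta$--curve summands. You assert that $(\partial_\alpha\phi)\cdot\alpha_j$ is a nonzero quantity ``computed from the local picture at $\mathbf x$ and $\mathbf y$,'' and that $(\partial_\alpha\phi)\cdot\beta_j+(\partial_\beta\phi)\cdot\beta_j$ is a boundary term with $(\partial_\beta\phi)\cdot\beta_j$ endpoint-dependent. In fact a parallel copy $\gamma$ of $\alpha_j$ can be taken disjoint from all $\alpha$--curves, so $(\partial_\alpha\phi)\cdot\gamma=0$ outright; and for a parallel copy $\gamma$ of $\beta_j$, since $\partial_\alpha\phi-\partial_\beta\phi$ bounds the domain of $\phi$ in $\Sigma$, one has $(\partial_\alpha\phi)\cdot\gamma=(\partial_\beta\phi)\cdot\gamma=0$. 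So these corrections vanish identically rather than being genuine coboundaries — this is exactly the observation the paper makes, and it is cleaner than the argument you sketch. Your conclusion still follows, since ``identically zero'' is a degenerate case of ``coboundary,'' but the intermediate claims as written are not correct.
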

\begin{proof}
Since $\omega_1,\omega_2$ are homologous in $Y$,
$\omega_1-\omega_2$ is homologous to a linear combination of
$\alpha$--curves and $\beta$--curves in $\Sigma$. It is easy to
check that $\partial_{\alpha}\phi\cdot\gamma=0$ whenever $\phi$ is
a Whitney disk and $\gamma$ is a parallel copy of an $\alpha$-- or
$\beta$--curve. Hence we may assume that $\omega_1-\omega_2$ is
null-homologous in $\Sigma$.

Let $D$ be a $2$--chain in $\Sigma$ such that $\partial
D=\omega_1-\omega_2$. Consider the map
$$\begin{array}{lccc}
f\co&\underline{CF^{\infty}}(Y,\omega_1;\Lambda)&\to&\underline{CF^{\infty}}(Y,\omega_2;\Lambda),\\
&\mathbf x&\mapsto &T^{D\cdot\mathbf x}\mathbf x
\end{array}$$
where $D\cdot\mathbf x$ is the cap product of $D$ with the
$0$--chain $\sum x_i$ if $\mathbf x=(x_1,\dots,x_g)$. We can check
that $f$ is a chain map which induces an isomorphism.
\end{proof}

The standard construction in Heegaard Floer homology
\cite{OSzAnn1} allows us to define the chain complexes
$\underline{\widehat{CF}}(Y,\omega;\Lambda)$ and
$\underline{CF^{\pm}}(Y,\omega;\Lambda)$. The homologies of the
chain complexes are called twisted Heegaard Floer homologies.
Proposition~\ref{prop:TwistInv} allows us to regard $\omega$ as a
homology class in $H_1(Y;\mathbb R)$.

This version of twisted Heegaard Floer homology is a special case
of the general construction in \cite[Section 8]{OSzAnn2}. In fact,
given a $1$--cycle $\omega$, $\Lambda$ can be viewed as a module
over the group ring $\mathbb Z[H^1(Y;\mathbb Z)]$, where the
action of $h\in H^1(Y;\mathbb Z)$ on $T^r\in\Lambda$ is given by
$$h\cdot T^r=T^{r+\langle h,\omega\rangle}.$$ One can check that
the twisted Floer homology defined above is exactly the twisted
Floer homology over the module $\Lambda$ as defined in
\cite[Section 8]{OSzAnn2}.

\begin{prop}\label{prop:OrRev}
Let $Y$ be a $3$--manifold, $\mathfrak s$ be a Spin$^c$ structure,
and $\omega$ be a $1$--cycle. Then, there are natural
isomorphisms:
$$\underline{\widehat{HF}^*}(Y,\omega,\mathfrak s)\cong\underline{\widehat{HF}_*}(-Y,\omega,\mathfrak s),\quad
\underline{HF^*_{\pm}}(Y,\omega,\mathfrak s)\cong
\underline{HF_*^{\pm}}(-Y,\omega,\mathfrak s).$$
\end{prop}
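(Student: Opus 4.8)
The plan is to imitate the proof of the corresponding statement for untwisted Heegaard Floer homology (Ozsv\'ath--Szab\'o, \cite{OSzAnn1}), checking that the twisting by $\omega$ does not obstruct anything. Recall that the duality isomorphism in the untwisted setting comes from the following source: a pointed Heegaard diagram $(\Sigma,\boldsymbol\alpha,\boldsymbol\beta,z)$ for $Y$ becomes, after reversing orientation of $\Sigma$, a pointed Heegaard diagram $(-\Sigma,\boldsymbol\alpha,\boldsymbol\beta,z)$ for $-Y$. The set of intersection points $\mathbb T_\alpha\cap\mathbb T_\beta$ is literally the same; the holomorphic disks in $\pi_2(\mathbf x,\mathbf y)$ counted on $-\Sigma$ are the complex conjugates of those in $\pi_2(\mathbf y,\mathbf x)$ counted on $\Sigma$, which have the same Maslov index $\mu$ and the same local multiplicities (in particular the same $n_z$). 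Thus the differential on the chain complex of $-Y$, read off with respect to the same generators, is the adjoint (transpose) of the differential on the chain complex of $Y$, up to the $n_z$-bookkeeping; this is precisely the identification $\underline{CF}^*(Y)\cong\underline{CF}_*(-Y)$ at the level of the cochain complex versus the chain complex.

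First I would verify that $A(\phi)$ is unchanged (equivalently, changes in a controlled way) under this correspondence. If $\phi\in\pi_2(\mathbf x,\mathbf y)$ is a disk on $\Sigma$, its conjugate $\bar\phi\in\pi_2(\mathbf y,\mathbf x)$ on $-\Sigma$ has boundary arc $\partial_\alpha\bar\phi$ equal to $\partial_\alpha\phi$ traversed in the opposite direction (since we have reversed the orientation of $\Sigma$, hence of all the curves' ambient surface), so $A(\bar\phi)=(\partial_\alpha\bar\phi)\cdot\omega=-A(\phi)$ — but the two effects, reversing the arc and reversing $\Sigma$, cancel, so in fact $A(\bar\phi)=A(\phi)$. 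One must be careful here about which intersection sign convention is in force; the upshot is that the exponent of $T$ attached to the corresponding matrix entry of the boundary map is the same after transposition. Concretely: the coefficient of $[\mathbf y, j]$ in $\underline\partial[\mathbf x,i]$ on the $Y$-side, namely $\#(\mathcal M(\phi)/\mathbb R)\,T^{A(\phi)}$ with $j = i - n_z(\phi)$, becomes the coefficient of $[\mathbf x, i']$ in $\underline\partial[\mathbf y, j']$ on the $-Y$-side with the roles of $\mathbf x,\mathbf y$ and of the indices swapped and the same power of $T$. Hence the dual complex $\mathrm{Hom}_\Lambda(\underline{CF}^{-}(Y,\omega;\Lambda),\Lambda)$, with the adjoint differential, is isomorphic as a $\Lambda$-complex to $\underline{CF}_{\ast}(-Y,\omega;\Lambda)$ (with the appropriate grading shift and completion convention for the $\infty$ and $+$ versions).

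Second, I would combine this with the standard relation between the homology of the dual complex and the cohomology of the original: since $\Lambda$ is a field, there are no $\mathrm{Ext}$ terms, so $H_\ast(\mathrm{Hom}_\Lambda(C_\ast,\Lambda))\cong\mathrm{Hom}_\Lambda(H_\ast(C_\ast),\Lambda)=: H^\ast(C_\ast)$ functorially. This immediately upgrades the chain-level isomorphism of the previous paragraph to $\underline{\widehat{HF}}{}^\ast(Y,\omega,\mathfrak s)\cong\underline{\widehat{HF}}_\ast(-Y,\omega,\mathfrak s)$ and similarly for the $\pm$ versions. The decomposition along $\mathrm{Spin}^c$ structures is respected throughout because the correspondence $\phi\leftrightarrow\bar\phi$ and the bijection on intersection points preserve the $\mathrm{Spin}^c$ assignment $\mathfrak s_z(\mathbf x)$ (the orientation reversal of $Y$ acts on $\mathrm{Spin}^c(Y)$ by the identity on the underlying set in the sense used here), so one gets the statement $\mathrm{Spin}^c$-structure by $\mathrm{Spin}^c$-structure.

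The main obstacle, and really the only place where genuine care is needed rather than citation, is bookkeeping: tracking the direction of the boundary arc $\partial_\alpha\phi$ and the intersection sign under the orientation reversal of $\Sigma$, to confirm that $A(\bar\phi)=A(\phi)$ (rather than $-A(\phi)$) so that the power of the Novikov variable $T$ comes out correctly under transposition. If the sign worked out the other way, one would instead obtain an isomorphism to the complex twisted by $-\omega$; but $\pm\omega$ give the same twisted Floer homology (compose the identification $A(\phi)\mapsto -A(\phi)$ with the automorphism $T^r\mapsto T^{-r}$ of $\Lambda$ — note $\Lambda$ admits negative exponents, so this is well-defined on finitely-supported-below sums after reindexing), so either way the conclusion as stated follows. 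I would present the sign computation carefully and then invoke the universal coefficient theorem over the field $\Lambda$ and the standard $\mathrm{Spin}^c$-refinement to conclude.
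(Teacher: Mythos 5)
Your approach is the same as the paper's: reverse the orientation of the Heegaard surface, identify $\pi_2(\mathbf x,\mathbf y)$ for $Y$ with $\pi_2(\mathbf y,\mathbf x)$ for $-Y$ together with their moduli spaces, and verify that the twisting exponent is preserved because the two sign changes (in the orientation of $\partial_\alpha\phi$ and in the orientation of $\Sigma$ used to compute the intersection number) cancel. The paper condenses the duality/UCT step into ``we can easily get our conclusion,'' whereas you spell it out via $\operatorname{Hom}_\Lambda(-,\Lambda)$ and the fact that $\Lambda$ is a field; this is correct and just a more explicit rendering of the same argument.
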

\begin{proof}
As in \cite[Proposition~2.5]{OSzAnn2}, if
$(\Sigma,\mbox{\boldmath$\alpha$},\mbox{\boldmath$\beta$})$ is a
Heegaard diagram for $Y$, then
$(-\Sigma,\mbox{\boldmath$\alpha$},\mbox{\boldmath$\beta$})$ is a
Heegaard diagram for $-Y$. Suppose $\phi\in\pi_2(\mathbf x,\mathbf
y)$ for $Y$, then there is a corresponding $\phi'\in\pi_2(\mathbf
y,\mathbf x)$ for $-Y$. Moreover,
$$\mathcal M_{J_s}(\phi)\cong\mathcal M_{-J_s}(\phi'),\quad\partial_{\alpha}(\phi)=-\partial_{\alpha}(\phi').$$ We then have
$$\big(\partial_{\alpha}(\phi)\cdot\omega\big)_{\Sigma}=\big(\partial_{\alpha}(\phi')\cdot\omega\big)_{-\Sigma}.$$
Now we can easily get our conclusion.
\end{proof}

\subsection{Twisted chain maps}

Let
$(\Sigma,\mbox{\boldmath$\alpha$},\mbox{\boldmath$\beta$},\mbox{\boldmath$\gamma$},z)$
be a Heegaard triple-diagram. Let $\omega$ be a $1$--cycle on
$\Sigma$ which is in general position with the $\alpha$--,
$\beta$-- and $\gamma$--curves.

The pants construction in \cite[Subsection 8.1]{OSzAnn1} gives
rise to a four-manifold $X_{\alpha,\beta,\gamma}$ with
$$\partial
X_{\alpha,\beta,\gamma}=-Y_{\alpha,\beta}-Y_{\beta,\gamma}+Y_{\alpha,\gamma}\:.$$
By this construction $X_{\alpha,\beta,\gamma}$ contains a region
$\Sigma\times \triangle$, where $\triangle$ is a two-simplex with
edges $e_{\alpha},e_{\beta},e_{\gamma}$. Let $\omega\times
[0,1]=\omega\times e_{\alpha}\subset X_{\alpha,\beta,\gamma}$ be
the linear combination of properly immersed annuli such that
$$\omega\times\{0\}\subset
Y_{\alpha,\beta}\:,\quad\omega\times\{1\}\subset
Y_{\alpha,\gamma}\:.$$

Suppose $\mathbf x\in\mathbb T_{\alpha}\cap\mathbb T_{\beta},
\mathbf y\in\mathbb T_{\beta}\cap\mathbb T_{\gamma},\mathbf
w\in\mathbb T_{\alpha}\cap\mathbb T_{\gamma}$, $\psi$ is a
topological Whitney triangle connecting them. Let
$\partial_{\alpha}\psi=\partial\psi\cap\mathbb T_{\alpha}$ be the
arc connecting $\mathbf x$ to $\mathbf w$. We can regard
$\partial_{\alpha}\psi$ as a multi-arc on $\Sigma$. Define
$$A_3(\psi)=(\partial_{\alpha}\psi)\cdot\omega.$$

Let the chain map
$$\underline{f^{\infty}_{\alpha,\beta,\gamma,\:\omega\times I}}\co \underline{CF^{\infty}}(Y_{\alpha,\beta},\omega\times\{0\};\Lambda)
\otimes_{\mathbb Q} CF^{\infty}(Y_{\beta,\gamma};\mathbb R)\to
\underline{CF^{\infty}}(Y_{\alpha,\gamma},\omega\times\{1\};\Lambda)$$
be defined by the formula:
$$\underline{f^{\infty}_{\alpha,\beta,\gamma,\:\omega\times I}}([\mathbf x,i]\otimes[\mathbf y,j])=
\sum_{\mathbf w}\sum_{\stackrel{\scriptstyle\psi\in\pi_2(\mathbf
x,\mathbf y,\mathbf w )}{\mu(\psi)=0}}\#\mathcal
M(\psi)T^{A_3(\psi)}[\mathbf w,i+j-n_z(\psi)].$$

The standard constructions \cite{OSzAnn1,OSzAnn2} allow us to
define chain maps introduced by cobordisms.

\subsection{Twisted Knot Floer homology}

Suppose $K$ is a rationally null-homologous oriented knot in $Y$,
$\xi$ is a relative $\mathrm{Spin}^c$--structure in
$\underline{\mathrm{Spin}^c}(Y,K)$ and $\omega$ is a $1$--cycle in
$Y-K$, we can define the twisted knot Floer complex
$\underline{CFK^{\infty}}(Y,K,\xi,\omega;\Lambda)$ as in
\cite[Section~3]{OSzRatSurg}, see also \cite{OSzKnot,RasThesis}.
Recall that the chain complex is generated by the $[\mathbf
x,i,j]$'s satisfying
\begin{equation}\label{eq:Complex}
\underline{\mathfrak s}_{w,z}(\mathbf
x)+(i-j)\cdot\mathrm{PD}[\mu]=\xi.
\end{equation}

Since $K$ is oriented, there is a natural way to extend a vector
field representing a relative $\mathrm{Spin}^c$--structure in
$\underline{\mathrm{Spin}^c}(Y,K)$ to a vector field on $Y$. Let
$$G_{Y,K}\co\underline{\mathrm{Spin}^c}(Y,K)\to\mathrm{Spin}^c(Y)$$
be the induced map of $\mathrm{Spin}^c$--structures.

\begin{lem}\label{lem:Forget}{\rm\cite[Proposition~3.2]{OSzRatSurg}}There are natural isomorphisms of chain complexes:
$$\underline{C}_{\xi}\{i=0\}\cong\underline{\widehat{CF}}(Y,G_{Y,K}(\xi)),\quad
\underline{C}_{\xi}\{j=0\}\cong\underline{\widehat{CF}}(Y,G_{Y,-K}(\xi)).$$
\end{lem}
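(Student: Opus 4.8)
The plan is to run the proof of the untwisted statement \cite[Proposition~3.2]{OSzRatSurg} essentially verbatim, and to check that the Novikov weights $T^{A(\phi)}$ are carried along unchanged because $\omega$ can be kept away from the basepoints. Concretely, I would start from a doubly-pointed admissible Heegaard diagram $(\Sigma,\mbox{\boldmath$\alpha$},\mbox{\boldmath$\beta$},w,z)$ subordinate to $K$ that computes $\underline{CFK^{\infty}}(Y,K,\xi,\omega;\Lambda)$, and isotope the $1$--cycle $\omega$ so that it lies on $\Sigma$, is transverse to all the $\alpha$-- and $\beta$--curves, and misses both $w$ and $z$; this is possible precisely because $\omega$ represents a class in $H_1(Y-K)$. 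Forgetting $z$, the tuple $(\Sigma,\mbox{\boldmath$\alpha$},\mbox{\boldmath$\beta$},w)$ is an admissible Heegaard diagram for $Y$ and $\omega$ is still an admissible twisting cycle for it, so this diagram computes $\underline{\widehat{CF}}(Y,G_{Y,K}(\xi))$, with the understanding that the surviving basepoint is $w$.

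\emph{Matching generators.} By \eqref{eq:Complex}, the generators of $\underline{C}_{\xi}\{i=0\}$ are the triples $[\mathbf x,0,j]$ with $\mathbf x\in\mathbb T_{\alpha}\cap\mathbb T_{\beta}$ and $\underline{\mathfrak s}_{w,z}(\mathbf x)-j\cdot\mathrm{PD}[\mu]=\xi$. For a fixed $\mathbf x$ such a $j$ is unique when it exists (exactly as in the untwisted case, since $\mathrm{PD}[\mu]$ is non-torsion), and it exists precisely when the $\mathrm{Spin}^c$ structure on $Y$ induced by $\mathbf x$ is $G_{Y,K}(\xi)$. Hence $[\mathbf x,0,j]\mapsto\mathbf x$ is a $\Lambda$--linear bijection from the generating set of $\underline{C}_{\xi}\{i=0\}$ onto the generating set of $\underline{\widehat{CF}}(Y,G_{Y,K}(\xi))$.

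\emph{Matching differentials.} On $\underline{C}_{\xi}\{i=0\}$ the induced differential is the part of the twisted knot Floer differential that preserves the level $i=0$; since $i$ records $-n_w$, it counts the classes $\phi\in\pi_2(\mathbf x,\mathbf y)$ with $\mu(\phi)=1$ and $n_w(\phi)=0$, each weighted by $\#(\mathcal M(\phi)/\mathbb R)\,T^{A(\phi)}$, and it lowers the $j$--coordinate by $n_z(\phi)$; the relation \eqref{eq:Complex} forces $j_{\mathbf x}-n_z(\phi)=j_{\mathbf y}$, so this drop is automatic and invisible under the bijection above. Under that bijection this is word-for-word the boundary map $\underline{\partial}$ of $\underline{\widehat{CF}}(Y,G_{Y,K}(\xi))$ from Section~2, because $A(\phi)=(\partial_{\alpha}\phi)\cdot\omega$ is computed from the same intersection data and $\omega$ avoids $w$ and $z$. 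This gives the first isomorphism; the second is the identical argument with the roles of $w$ and $z$ interchanged, since $\{j=0\}$ picks out $n_z(\phi)=0$, i.e.\ $\underline{\widehat{CF}}(Y)$ taken with basepoint $z$, which is the ambient twisted Floer complex of the reversed knot $-K$ and so lands in $G_{Y,-K}(\xi)$.

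The only point not already contained in the untwisted proof is the invariance of the weights $T^{A(\phi)}$, and this is immediate from $\omega\subset Y-K$; the substantive part — verifying that $G_{Y,K}(\xi)$ (respectively $G_{Y,-K}(\xi)$) is the correct $\mathrm{Spin}^c$ structure — is handled exactly as in \cite[Proposition~3.2]{OSzRatSurg}. I therefore do not expect a genuine obstacle; the only thing requiring care is the $\mathrm{Spin}^c$ bookkeeping that pins down the target.
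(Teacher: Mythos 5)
Your proposal is correct and matches the paper's approach. The paper states this lemma as a direct citation of \cite[Proposition~3.2]{OSzRatSurg} without supplying a proof, and your reconstruction correctly identifies the only genuinely new point: the Novikov weight $T^{A(\phi)}$ depends only on $\partial_{\alpha}\phi$ and the $1$--cycle $\omega\subset Y-K$, not on either basepoint, so the untwisted identification of generators and differentials (forgetting $z$ for the first isomorphism and $w$ for the second, with the $\mathrm{Spin}^c$ bookkeeping unchanged) carries over verbatim to the twisted setting.
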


We can construct a Heegaard diagram
$(\Sigma,\mbox{\boldmath$\alpha$},\mbox{\boldmath$\beta$},w,z)$
for $(Y,K)$, such that $\beta_1=\mu$ is the meridian of $K$, and
$\alpha_1$ is the only $\alpha$--curve that intersects $\beta_1$,
$\alpha_1\cap\beta_1=\{x\}$. There is a curve $\lambda\subset
\Sigma$ which gives rise to the knot $K$.
$(\Sigma,\mbox{\boldmath$\alpha$},\mbox{\boldmath$\gamma$},z)$ is
a diagram for $Y_{m\mu+\lambda}(K)$, where $\gamma_1=m\mu+\lambda$
and all other $\gamma_i$'s are small Hamiltonian translations of
$\beta_i$'s. Figure~1 (which is a modification of
\cite[Figure~1]{OSzRatSurg}) is the local picture in a cylindrical
neighborhood of $\beta_1$.

\begin{figure}
\begin{picture}(340,100)
\put(0,0){\scalebox{0.64}{\includegraphics*[25pt,380pt][560pt,
530pt]{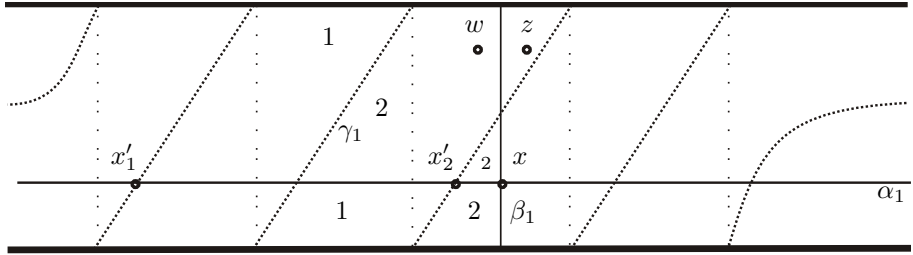}}}

\put(330,22){$\alpha_1$}

\put(191,14){$\beta_1$}

\put(174,85){$w$}

\put(195,85){$z$}

\put(192,35){$x$}

\put(160,35){$x'_2$}

\put(40,35){$x'_1$}

\put(140,53){$2$}

\put(175,14){$2$}

\put(180,33){\scriptsize$2$}

\put(120,80){$1$}

\put(125,14){$1$}

 \put(126,45){$\gamma_1$}

\end{picture}
\caption{Local picture of the triple Heegaard diagram}
\end{figure}

As in \cite{OSzRatSurg}, when $m$ is sufficiently large, one
defines a map
$$\Xi\co\mathrm{Spin}^c(Y_{m\mu+\lambda}(K))\to\underline{\mathrm{Spin}^c}(Y,K)$$
as follows. If $\mathfrak
t\in\mathrm{Spin}^c(Y_{m\mu+\lambda}(K))$ is represented by an
 point $\mathbf x'$ supported in
the winding region, let $\mathbf x\in\mathbb T_{\alpha}\cap\mathbb
T_{\beta}$ be the ``nearest point", and let $\psi\in\pi_2(\mathbf
x',\Theta,\mathbf x)$ be a small triangle. Then
\begin{equation}\label{eq:Xi}
\Xi(\mathfrak t)=\underline{\mathfrak s}_{w,z}(\mathbf
x)+\big(n_w(\psi)-n_z(\psi)\big)\cdot\mu.
\end{equation}

\begin{lem}\label{lem:XiInj}
The map $\Xi$ is injective.
\end{lem}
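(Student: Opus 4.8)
The plan is to show that $\Xi$ is injective by exhibiting a left inverse, or more precisely by showing that the $\mathrm{Spin}^c$--structure $\mathfrak t$ on $Y_{m\mu+\lambda}(K)$ can be recovered from $\Xi(\mathfrak t)$. Recall that $\mathfrak t$ is represented by an intersection point $\mathbf x'$ supported in the winding region; the nearest point $\mathbf x$ and the small triangle $\psi\in\pi_2(\mathbf x',\Theta,\mathbf x)$ are determined (up to the relevant ambiguities) by $\mathbf x'$. The formula~\eqref{eq:Xi} says $\Xi(\mathfrak t)=\underline{\mathfrak s}_{w,z}(\mathbf x)+(n_w(\psi)-n_z(\psi))\cdot\mu$. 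First I would observe that as $\mathbf x'$ ranges over the intersection points in the winding region lying over a fixed nearest point $\mathbf x$, the integer $n_w(\psi)-n_z(\psi)$ takes distinct values — indeed it changes by $\pm1$ each time we pass from one intersection point of $\gamma_1$ with $\alpha_1$ in the winding region to the next, because the winding region is where $\gamma_1=m\mu+\lambda$ wraps around $m$ times and consecutive small triangles differ by a disk whose $n_w-n_z$ differs by one. So the association $\mathbf x'\mapsto(\mathbf x, n_w(\psi)-n_z(\psi))$ is injective on the winding region.

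Next I would analyze the map on the target side. By~\eqref{eq:Complex} and the structure of $\underline{\mathrm{Spin}^c}(Y,K)$, two relative $\mathrm{Spin}^c$--structures that differ by $k\cdot\mathrm{PD}[\mu]$ are distinct for distinct $k$ (as long as $[\mu]$ has infinite order in the relevant homology group, which holds since $K$ is null-homologous — in fact rationally null-homologous suffices for $\mathrm{PD}[\mu]$ to be non-torsion of the appropriate order). Therefore the pair $(G_{Y,K}(\Xi(\mathfrak t)), \text{the }\mu\text{-component})$ determines $\Xi(\mathfrak t)$, and hence it suffices to show that $\underline{\mathfrak s}_{w,z}(\mathbf x)$ together with $n_w(\psi)-n_z(\psi)$ is recoverable from $\Xi(\mathfrak t)$. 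Since $\underline{\mathfrak s}_{w,z}(\mathbf x)$ and $n_w(\psi)-n_z(\psi)$ are exactly the two pieces appearing in~\eqref{eq:Xi} and they live in complementary summands (the $\mathbb Z$--coset generated by $\mathrm{PD}[\mu]$ versus a set of coset representatives), the decomposition in~\eqref{eq:Xi} is unique. Combining this with the injectivity of $\mathbf x'\mapsto(\mathbf x,n_w(\psi)-n_z(\psi))$ from the first step gives that $\mathfrak t\mapsto\Xi(\mathfrak t)$ is injective.

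The main obstacle I anticipate is making precise the claim that "$m$ sufficiently large" guarantees that every $\mathrm{Spin}^c$--structure $\mathfrak t$ on $Y_{m\mu+\lambda}(K)$ is represented by an intersection point supported in the winding region, and that distinct $\mathfrak t$ give genuinely distinct pairs $(\mathbf x, n_w(\psi)-n_z(\psi))$ rather than pairs that happen to collide after passing to $\mathrm{Spin}^c(Y_{m\mu+\lambda}(K))$ via the first Chern class. This is the standard large-surgery bookkeeping from \cite[Section~4]{OSzRatSurg} (see also \cite{OSzKnot}): when $m\gg0$ the $m$ intersection points in the winding region over a given $\mathbf x$ realize $m$ consecutive values of the $\mathrm{Spin}^c$ evaluation, and these do not wrap around, so no two collide. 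I would cite that computation rather than reproduce it, and then the injectivity of $\Xi$ follows formally from the two observations above. A secondary point to check is that the small triangle $\psi$ is unique, so that $n_w(\psi)-n_z(\psi)$ is well-defined; this is immediate because the winding region is planar and the triangle is supported in a small disk there, so $\pi_2(\mathbf x',\Theta,\mathbf x)$ contains a canonical smallest representative.
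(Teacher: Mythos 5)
There is a genuine gap in your argument, and it sits exactly where the real work of the lemma lies. Your key claim is that the decomposition $\Xi(\mathfrak t)=\underline{\mathfrak s}_{w,z}(\mathbf x)+\bigl(n_w(\psi)-n_z(\psi)\bigr)\cdot\mu$ is unique because the two summands ``live in complementary summands.'' That is false: the values $\underline{\mathfrak s}_{w,z}(\mathbf x)$ of intersection points $\mathbf x\in\mathbb T_{\alpha}\cap\mathbb T_{\beta}$ do \emph{not} lie in a fixed transversal to $\mathbb Z\cdot\mathrm{PD}[\mu]$. Two intersection points $\mathbf x_1,\mathbf x_2$ (both with $\beta_1$--component $x$) connected by a Whitney disk $\phi$ whose boundary winds around $\beta_1=\mu$ satisfy $\underline{\mathfrak s}_{w,z}(\mathbf x_1)-\underline{\mathfrak s}_{w,z}(\mathbf x_2)=-\bigl(n_w(\phi)-n_z(\phi)\bigr)\cdot\mathrm{PD}[\mu]$, a possibly nonzero multiple of $\mathrm{PD}[\mu]$. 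So a collision $\underline{\mathfrak s}_{w,z}(\mathbf x_1)+n_1\mu=\underline{\mathfrak s}_{w,z}(\mathbf x_2)+n_2\mu$ with $\mathbf x_1\neq\mathbf x_2$ and $n_1\neq n_2$ is exactly the situation the lemma must rule out (or rather, show gives $\mathfrak t_1=\mathfrak t_2$), and your argument does not engage with it. Your observation that within the winding region over a \emph{fixed} nearest point $\mathbf x$ the quantity $n_w(\psi)-n_z(\psi)$ separates the lifts is correct but insufficient: it says nothing about lifts over two distinct nearest points.

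There is also a logic problem in how you chain the two observations. Observation (a) is an injectivity statement for $\mathbf x'\mapsto(\mathbf x,n)$, but what you would need after (b) is to pass from the pair $\bigl(\underline{\mathfrak s}_{w,z}(\mathbf x),n\bigr)$ back to $\mathfrak t$, and the relative $\mathrm{Spin}^c$--structure of $\mathbf x$ does not determine $\mathbf x$. The paper's proof takes a different and genuinely necessary route: assuming $\Xi(\mathfrak t_1)=\Xi(\mathfrak t_2)$, one first extracts that $\mathbf x_1,\mathbf x_2$ represent the same $\mathrm{Spin}^c$--structure on $Y$ (since $\mu$ is null-homologous), so there is a connecting disk $\phi\in\pi_2(\mathbf x_1,\mathbf x_2)$, and then the hypothesis forces $n_w(\phi)-n_z(\phi)=-(n_w(\psi^d)-n_z(\psi^d))$ with $\psi^d=\psi_1-\psi_2$; gluing $\phi$ to $\psi^d$ produces a topological Whitney disk in the $\alpha,\gamma$--diagram from $\mathbf x'_1$ to $\mathbf x'_2$, whence $\mathfrak t_1=\mathfrak t_2$. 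Your reconstruction argument cannot be repaired without essentially reproducing this gluing step, because the ``unique decomposition'' you invoke is precisely what that step is designed to establish in disguise.
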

\begin{proof}
Suppose two intersection points $\mathbf x'_1,\mathbf
x'_2\in\mathbb T_{\alpha}\cap\mathbb T_{\gamma}$ are supported in
the winding region, and they represent two Spin$^c$--structures
$\mathfrak t_1,\mathfrak t_2\in\mathrm{Spin}^c(Y_{m\mu+\lambda})$.
 Let $\mathbf x_1,\mathbf x_2\in\mathbb T_{\alpha}\cap\mathbb T_{\beta}$ be the
nearest points of $\mathbf x'_1,\mathbf x'_2$, and let
$\psi_1,\psi_2$ be the corresponding small triangles.

Assume that $\Xi(\mathfrak t_1)=\Xi(\mathfrak t_2)$. By Equation
(\ref{eq:Xi}), we have
\begin{equation}\label{eq:SpinEq}
\underline{\mathfrak s}_{w,z}(\mathbf
x_1)+\big(n_w(\psi_1)-n_z(\psi_1)\big)\cdot\mu=\underline{\mathfrak
s}_{w,z}(\mathbf x_2)+\big(n_w(\psi_2)-n_z(\psi_2)\big)\cdot\mu.
\end{equation}
Since $\mu$ is null-homologous in $Y$, $\mathbf x_1,\mathbf x_2$
represent the same Spin$^c$--structure in $\mathrm{Spin}^c(Y)$.
Hence there is a topological Whitney disk $\phi$ for $\mathbb
T_{\alpha},\mathbb T_{\beta}$ connecting $\mathbf x_1$ to $\mathbf
x_2$. Since the $\beta_1$--components of $\mathbf x_1$ and
$\mathbf x_2$ are both $x$, $\partial\phi$ contains
$n_w(\phi)-n_z(\phi)$ copies of $\beta_1$. Let
$\psi^d=\psi_1-\psi_2$. (See Figure~1 for an illustration.) By
(\ref{eq:SpinEq}), we have
$$\underline{\mathfrak s}_{w,z}(\mathbf
x_1)-\underline{\mathfrak s}_{w,z}(\mathbf
x_2)=-\big(n_w(\psi^d)-n_z(\psi^d)\big)\cdot\mu,$$ thus
$$n_w(\phi)-n_z(\phi)=-\big(n_w(\psi^d)-n_z(\psi^d)\big).$$
So we can glue $\phi$ and $\psi^d$  together to get a disk
$\varphi'$. After a Hamiltonian translation, $\varphi'$ becomes a
topological Whitney disk $\varphi$ for $\mathbb T_{\alpha},\mathbb
T_{\gamma}$, connecting $\mathbf x'_1$ to $\mathbf x'_2$. Hence
$\mathfrak t_1=\mathfrak t_2$.
\end{proof}

The following result is the twisted version of
\cite[Theorem~4.1]{OSzRatSurg}. We do not state it in the most
generality since the current version suffices for our purpose.

\begin{prop}\label{prop:LargeSurg}
Let $K\subset Y$ be a rationally null-homologous knot in a closed,
oriented three-manifold, equipped with a framing $\lambda$, and
let $\omega$ be a $1$--cycle in $Y-K$. Let $\widehat
A_{\xi}(Y,K,\omega)=\underline{C}_{\xi}\big\{\max\{i,j\}=0\big\}$.
Then, for all sufficiently large $m$ and all $\mathfrak
t\in\mathrm{Spin}^c(Y_{m\mu+\lambda}(K))$, there is an isomorphism
$$\Psi_{\mathfrak t,m}\co \underline{\widehat{CF}}(Y_{m\mu+\lambda}(K),\mathfrak t,\omega;\Lambda)
\to\widehat A_{\Xi(\mathfrak t)}(Y,K,\omega).$$
\end{prop}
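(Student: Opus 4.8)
The plan is to carry out the proof of \cite[Theorem~4.1]{OSzRatSurg} (see also \cite{OSzKnot}) in the presence of twisted coefficients; the only genuinely new point is the bookkeeping of the exponents of $T$. I would work with the triple diagram $(\Sigma,\mbox{\boldmath$\alpha$},\mbox{\boldmath$\beta$},\mbox{\boldmath$\gamma$},w,z)$ introduced before Figure~1, with $\gamma_1=m\mu+\lambda$ and the remaining $\gamma_i$ small Hamiltonian translates of the $\beta_i$. Since $\omega$ is a $1$--cycle in $Y-K$ it avoids the basepoints, and since the winding region around $\beta_1$ in which all the wrapping of $\gamma_1$ occurs is an embedded disk in $\Sigma$, I may isotope $\omega$ on $\Sigma$, without changing its homology class in $H_1(Y-K)$, so that it is disjoint from this region as well. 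In particular $A_3(\psi)=(\partial_\alpha\psi)\cdot\omega=0$ for every small triangle $\psi$ of the sort occurring in \eqref{eq:Xi}.

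Next I would define $\Psi_{\mathfrak t,m}$ exactly as in \cite[Section~4]{OSzRatSurg}, as the twisted triangle-counting map associated to the above triple diagram (using the distinguished generator $\Theta$ of \cite{OSzRatSurg} on the relevant intermediate factor, cf.\ the map $\underline{f^{\infty}_{\alpha,\beta,\gamma,\:\omega\times I}}$ above), restricted to the $\mathrm{Spin}^c$ structure $\mathfrak t$ on $Y_{m\mu+\lambda}(K)$ and landing in the subquotient complex $\widehat A_{\Xi(\mathfrak t)}(Y,K,\omega)=\underline{C}_{\Xi(\mathfrak t)}\{\max\{i,j\}=0\}$ of $\underline{CFK^{\infty}}(Y,K,\Xi(\mathfrak t),\omega;\Lambda)$. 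For $m$ large, every $\mathbf x'\in\mathbb T_\alpha\cap\mathbb T_\gamma$ representing $\mathfrak t$ is supported in the winding region; the rule sending $\mathbf x'$ to its nearest point $\mathbf x$ together with the unique pair $(i,j)$ satisfying $\max\{i,j\}=0$ and $i-j=n_w(\psi)-n_z(\psi)$ (for the small triangle $\psi$ of $\mathbf x'$) yields, by \eqref{eq:Xi} and \eqref{eq:Complex}, a generator $[\mathbf x,i,j]$ of $\widehat A_{\Xi(\mathfrak t)}$, and Lemma~\ref{lem:XiInj} shows this rule is injective. Because $A_3(\psi)=0$, the map $\Psi_{\mathfrak t,m}$ sends each generator $\mathbf x'$ to $[\mathbf x,i,j]$ plus terms supported by triangles of larger area, so the leading-term argument of \cite[Section~4]{OSzRatSurg} (together with $\Lambda$ being a field) shows $\Psi_{\mathfrak t,m}$ is an isomorphism of $\Lambda$--modules; here Lemma~\ref{lem:Forget} handles the identification of the relevant $\mathrm{Spin}^c$ structures of $Y$, exactly as in the untwisted case.

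It then remains to check that $\Psi_{\mathfrak t,m}$ is a chain map, and this is where pushing $\omega$ off the winding region is used. One runs the usual large-$m$ degeneration argument of \cite[Section~4]{OSzRatSurg}: for $m$ large the index-one disks $\phi$ with $n_z(\phi)=0$ counted by $\underline{\partial}$ on $Y_{m\mu+\lambda}(K)$ correspond, via the nearest-point identification and juxtaposition with (differences of) small triangles, to the index-one domains counted by the differential of $\widehat A_{\Xi(\mathfrak t)}$, with equal holomorphic counts. The new point is that the $T$--weights also agree: both the surgery differential on $\underline{\widehat{CF}}(Y_{m\mu+\lambda}(K),\mathfrak t,\omega;\Lambda)$ and the differential on $\underline{CFK^{\infty}}(Y,K,\Xi(\mathfrak t),\omega;\Lambda)$ weight a domain $\phi$ by $T^{(\partial_\alpha\phi)\cdot\omega}$, the $\alpha$--curves are common to the two diagrams, and in each corresponding pair the two domains differ only inside the winding region, which $\omega$ avoids; hence their $\alpha$--boundaries have the same intersection number with $\omega$. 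Equivalently, $A$ is additive under juxtaposition of domains and $A_3$ vanishes on the small triangles, so the passage between the two diagrams preserves the $T$--exponent.

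The step I expect to be the main obstacle is precisely this last one, namely verifying that the domain identifications underpinning the untwisted large surgery formula are compatible with the Novikov weights; it is handled, as indicated, by arranging $\omega$ to miss the winding region. The remaining ingredients, namely the admissibility of the triple diagram and the finiteness or Novikov convergence making the twisted triangle and disk counts well defined over $\Lambda$, are exactly as in \cite[Section~4]{OSzRatSurg} and in Section~2.
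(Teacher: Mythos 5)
Your proposal correctly fleshes out what the paper leaves implicit: the paper's proof of Proposition~\ref{prop:LargeSurg} consists entirely of the citation ``See \cite[Theorem~4.1]{OSzRatSurg}'', indicating that the untwisted argument is expected to carry over verbatim to twisted coefficients. Your key observation---that $\omega$ may be isotoped within $\Sigma$ so as to miss the winding region, whence $A_3(\psi)=0$ for the small triangles and the Novikov weights are preserved under the domain identifications that underlie the large-surgery isomorphism---is precisely the bookkeeping the paper took for granted, and the remaining steps follow \cite[Section~4]{OSzRatSurg} exactly as you indicate.
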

\begin{proof}
See \cite[Theorem~4.1]{OSzRatSurg}.
\end{proof}

Another result we will need is the following twisted version of
\cite[Corollary~5.3]{OSzRatSurg}.

\begin{prop}\label{prop:Kunneth}
If $K_2\subset Y_2$ is a $U$--knot, $\omega$ is a $1$--cycle in
$Y_1-K_1$, then for each
$\xi_1\in\underline{\mathrm{Spin}^c}(Y_1,K_1)$ and $\mathfrak
s_2\in \mathrm{Spin}^c(Y_2)$, there is some
$\xi_2\in\underline{\mathrm{Spin}^c}(Y_2,K_2)$ representing
$\mathfrak s_2$, with the property that
$$\underline{CFK^{\infty}}(Y_1,K_1,\omega,\xi_1)\cong\underline{CFK^{\infty}}(Y_1\#Y_2,K_1\#K_2,\omega,\xi_1\#\xi_2)$$
as $\mathbb Z\oplus\mathbb Z$--filtered chain complexes.
\end{prop}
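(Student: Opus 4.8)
The plan is to reduce to the untwisted statement by a direct translation of the Künneth-type argument in \cite[Corollary~5.3]{OSzRatSurg}, keeping track of the Novikov coefficients throughout. The key point is that the $1$--cycle $\omega$ lives entirely in $Y_1-K_1$, so it is disjoint from the connect-sum region and from $Y_2$ altogether; hence the twisting is ``local to the first factor'' and should pass through the Künneth splitting without interference. First I would fix Heegaard diagrams $(\Sigma_1,\boldsymbol\alpha_1,\boldsymbol\beta_1,w_1,z_1)$ for $(Y_1,K_1)$ and $(\Sigma_2,\boldsymbol\alpha_2,\boldsymbol\beta_2,w_2,z_2)$ for $(Y_2,K_2)$, the latter with $K_2$ a $U$--knot, and form the connected sum diagram $(\Sigma_1\#\Sigma_2,\dots)$ for $(Y_1\#Y_2,K_1\#K_2)$ by tubing at $z_1$ and $z_2$. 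After isotoping $\omega$ off the tubing region I may regard $\omega$ as a $1$--cycle on $\Sigma_1\#\Sigma_2$ supported in $\Sigma_1$ and in general position with all the attaching curves.

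Next I would set up the generator-level identification. Intersection points $\mathbf x\in\mathbb T_{\alpha_1\#\alpha_2}\cap\mathbb T_{\beta_1\#\beta_2}$ are in bijection with pairs $(\mathbf x_1,\mathbf x_2)$, and the filtration levels $(i,j)$ add; this is exactly the $\mathbb Z\oplus\mathbb Z$--filtered identification of the underlying modules, now over $\Lambda$. The point is then to check that this identification is a chain map for the twisted differentials. A holomorphic disk (or, in the stabilized picture, a disk in the connected sum) decomposes, for generic almost complex structure and sufficiently stretched neck, into a pair $(\phi_1,\phi_2)$; because $\omega\subset\Sigma_1$, the arc $\partial_\alpha\phi$ meets $\omega$ only in its $\Sigma_1$--part, so $A(\phi)=(\partial_\alpha\phi)\cdot\omega=(\partial_{\alpha}\phi_1)\cdot\omega=A(\phi_1)$, and the exponent of $T$ depends only on the first factor. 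Thus the twisted differential on the connected sum is the ``$\omega$-twisted differential on factor $1$'' tensored with the ordinary differential on factor $2$, and the identification of modules is a filtered chain isomorphism over $\Lambda$. Finally, as in \cite{OSzRatSurg}, the $U$--knot hypothesis guarantees $CFK^\infty(Y_2,K_2)$ is filtered chain homotopy equivalent to $CF^\infty(Y_2)$ with a trivial (constant) knot filtration, which lets one absorb the second factor and pins down the relative $\mathrm{Spin}^c$ structure: the affine set $\underline{\mathrm{Spin}^c}(Y_2,K_2)$ over a given $\mathfrak s_2\in\mathrm{Spin}^c(Y_2)$ contains a canonical $\xi_2$ for which the knot filtration on the $Y_2$ factor is trivial, and the additivity of $\underline{\mathfrak s}_{w,z}$ and of the $(i-j)$ count in \eqref{eq:Complex} under connected sum shows that the generators surviving in $\underline{CFK^\infty}(Y_1\#Y_2,K_1\#K_2,\omega,\xi_1\#\xi_2)$ are precisely those of $\underline{CFK^\infty}(Y_1,K_1,\omega,\xi_1)$.

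The main obstacle is the neck-stretching/gluing argument that identifies the moduli spaces of the connected-sum diagram with products of moduli spaces of the two factors while controlling the coefficient $T^{A(\phi)}$; the bookkeeping that $A(\phi)=A(\phi_1)$ is conceptually easy but must be stated carefully so that no contribution from $\phi_2$ or from the tube enters. Fortunately this is essentially the content of \cite[Corollary~5.3]{OSzRatSurg} with an extra formal variable carried along, and since $\omega$ never enters the second factor the argument there applies verbatim once one checks the (routine) fact that $\partial_\alpha\phi\cdot\omega$ is computed in $\Sigma_1$. I would therefore present the proof as: (i) build the connected-sum diagram and place $\omega$ in $\Sigma_1$; (ii) invoke the untwisted Künneth identification of \cite[Corollary~5.3]{OSzRatSurg}; (iii) observe that the coefficient $T^{A(\phi)}$ attached to each disk depends only on its first-factor component, so the identification upgrades to a $\Lambda$--linear filtered chain isomorphism; (iv) read off the required $\xi_2$ from the $U$--knot normalization.
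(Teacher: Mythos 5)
The paper gives no proof of this proposition; it is simply stated as ``the twisted version of [OSzRatSurg, Corollary~5.3]'', with the implicit claim that the untwisted argument carries over. Your proposal is precisely that translation, and the one point that actually needs to be checked --- that since $\omega$ is supported in $\Sigma_1$, away from the connect-sum tube, the twisting exponent $A(\phi)=(\partial_\alpha\phi)\cdot\omega$ equals $A(\phi_1)$ and hence the differential splits as (twisted on factor 1) $\otimes$ (untwisted on factor 2) --- is exactly the observation you make, so your argument is sound and is essentially what the paper intends.
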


\section{Properties of twisted Heegaard Floer homology}

In this section, we collect some properties of twisted Heegaard
Floer homology. In particular, we prove some nontriviality results
following \cite{OSzGenus}.

\subsection{Surgery exact sequences}

As in \cite{OSzAnn2}, there are surgery exact sequences for
twisted Heegaard Floer homology. One of them is as follows (see
also \cite{AiP}).

\begin{prop}\label{prop:Exact}
Suppose $K\subset Y$ is a knot with frame $\lambda$, and
$\omega\subset Y-K$ is a $1$--cycle, then $\omega$ also lies in
the manifolds $Y_{\lambda}$ and $Y_{\lambda+\mu}$ obtained by
surgeries on $K$. The $2$--handle addition cobordism $W$ from $Y$
to $Y_{\lambda}$ naturally contains $\omega\times I$. We can
define a chain map induced by $W$:
$$\underline{f^{\infty}_{W,\:\omega\times I}}\co\underline{CF^{\infty}}(Y,\omega;\Lambda)
\to\underline{CF^{\infty}}(Y_{\lambda},\omega;\Lambda).$$
Similarly, there are two other chain maps induced by the
cobordisms $Y_{\lambda}\to Y_{\lambda+\mu}$ and
$Y_{\lambda+\mu}\to Y$. We then have the long exact sequence :
$$
\begin{CD}
\cdots\to\underline{HF^+}(Y,\omega;\Lambda)\to
\underline{HF^+}(Y_{\lambda},\omega;\Lambda)\to
\underline{HF^+}(Y_{\lambda+\mu},\omega;\Lambda)\to\cdots.
\end{CD}
$$
\end{prop}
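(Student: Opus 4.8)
The plan is to run the standard Ozsv\'ath--Szab\'o surgery exact triangle argument, carrying along the Novikov weights $T^{A_3(\psi)}$ defined in Subsection~2.2. First I set up a Heegaard multi-diagram $(\Sigma,\boldsymbol\alpha,\boldsymbol\beta,\boldsymbol\gamma,\boldsymbol\delta,z)$ in which $(\Sigma,\boldsymbol\alpha,\boldsymbol\beta,z)$ presents $Y$, $(\Sigma,\boldsymbol\alpha,\boldsymbol\gamma,z)$ presents $Y_\lambda$, and $(\Sigma,\boldsymbol\alpha,\boldsymbol\delta,z)$ presents $Y_{\lambda+\mu}$, with $\beta_1=\mu$, $\gamma_1=\lambda$, $\delta_1=\lambda+\mu$ pairwise meeting in one point, and $\gamma_i,\delta_i$ ($i\ge 2$) small Hamiltonian translates of $\beta_i$ performed inside a fixed region $R\subset\Sigma$. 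I then isotope $\omega$ onto $\Sigma$ into general position with all the curves, keeping it disjoint from the surgery region and from $R$; with this choice $\omega$ represents the given $1$--cycle simultaneously in $Y$, $Y_\lambda$, $Y_{\lambda+\mu}$, and in each pants cobordism $X_{\alpha,\beta,\gamma}$, $X_{\alpha,\gamma,\delta}$, $X_{\alpha,\beta,\delta}$ the annuli $\omega\times e_\alpha$ join the two $\alpha$--containing boundary components and are disjoint from $Y_{\beta,\gamma}$, $Y_{\gamma,\delta}$, $Y_{\beta,\delta}$. In particular the twisted invariants of those connected sums of copies of $S^1\times S^2$ are untwisted, so the top generators $\Theta_{\beta\gamma},\Theta_{\gamma\delta},\Theta_{\beta\delta}$ exist exactly as in the untwisted theory.

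Next I define the three maps. The cobordism map $\underline{f^{\infty}_{W,\:\omega\times I}}$ is $\underline{f^{\infty}_{\alpha,\beta,\gamma,\:\omega\times I}}(-\otimes\Theta_{\beta\gamma})$ in the notation of Subsection~2.2, and likewise on $\underline{\widehat{CF}}$ and $\underline{CF^+}$; the other two maps $Y_\lambda\to Y_{\lambda+\mu}$ and $Y_{\lambda+\mu}\to Y$ are defined analogously from $X_{\alpha,\gamma,\delta}$ and $X_{\alpha,\beta,\delta}$. These are $\Lambda$--module chain maps by the usual degeneration of one--parameter families of pseudo-holomorphic triangles; the sums defining them are locally finite in $\Lambda$ because $A_3(\psi)$ is bounded below on each homotopy class as the energy grows, by the admissibility hypothesis imposed in Section~2. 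Summing over $\mathrm{Spin}^c$ structures gives the three arrows in the asserted sequence of $\underline{HF^+}(\,\cdot\,,\omega;\Lambda)$.

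It remains to prove exactness, and here I would follow Ozsv\'ath--Szab\'o's scheme verbatim; the only new input is the additivity of the weights. Consecutive compositions are null-homotopic: the homotopy counts pseudo-holomorphic rectangles in $(\Sigma,\boldsymbol\alpha,\boldsymbol\beta,\boldsymbol\gamma,\boldsymbol\delta)$, weighted by $T^{(\partial_\alpha\phi)\cdot\omega}$, together with the small triangle $\psi_0\in\pi_2(\Theta_{\beta\gamma},\Theta_{\gamma\delta},\Theta_{\beta\delta})$; since $\omega$ avoids $Y_{\beta,\gamma}$, $Y_{\gamma,\delta}$, $Y_{\beta,\delta}$ one has $A_3(\psi_0)=0$, so the identification $\Theta_{\beta\gamma}*\Theta_{\gamma\delta}\simeq\Theta_{\beta\delta}$ is literally the untwisted one. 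When a rectangle $\phi$ degenerates as a juxtaposition $\psi_1*\psi_2$ of triangles, the middle $\gamma$-- or $\delta$--arc cancels and $\partial_\alpha\phi=\partial_\alpha\psi_1+\partial_\alpha\psi_2$, whence $T^{(\partial_\alpha\phi)\cdot\omega}=T^{A_3(\psi_1)}\cdot T^{A_3(\psi_2)}$; this is the precise analogue of the familiar additivity $n_z(\phi)=n_z(\psi_1)+n_z(\psi_2)$ and it makes the weights compatible with composition. For actual exactness one then invokes Ozsv\'ath--Szab\'o's algebraic lemma on exact triangles: it suffices that the relevant composite built from these triangle maps and homotopies in the $(\boldsymbol\beta,\boldsymbol\gamma,\boldsymbol\delta)$--directions is a quasi-isomorphism, and those pieces are untwisted for the same reason as above, so this is exactly the Ozsv\'ath--Szab\'o computation. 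Assembling the pieces yields the long exact sequence.

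The main obstacle is the usual one for twisted coefficients, namely bookkeeping: arranging a single consistent choice of $\omega$ on $\Sigma$ and of the isotopy region $R$ so that $\omega$ misses every boundary piece of the form $Y_{\beta,\gamma}$ and every small triangle, and then checking the additivity of the $A$-- and $A_3$--weights under all the degenerations that occur in the null-homotopy and exactness arguments. Once this is done the proof is formally identical to the untwisted one. (Alternatively one may observe, as after Proposition~\ref{prop:TwistInv}, that this $\Lambda$--valued theory is the $\mathbb Z[H^1(Y;\mathbb Z)]$--twisted theory base-changed along $h\cdot T^r=T^{r+\langle h,\omega\rangle}$, and deduce the sequence from the twisted exact triangle of \cite{OSzAnn2}; but this requires a separate flatness check, so the direct geometric argument is cleaner.)
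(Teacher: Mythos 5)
The paper does not actually prove Proposition~\ref{prop:Exact}: it states the result and points at \cite{OSzAnn2} and \cite{AiP}, treating the twisted exact triangle as a standard consequence of the Ozsv\'ath--Szab\'o argument. Your proof fills in precisely that standard argument --- the quadruple diagram, the observation that the $\Theta$--generators and the null-homotopy triangle carry no twisting because $A_3$ depends only on the $\alpha$--boundary, and the additivity $\partial_\alpha\phi=\partial_\alpha\psi_1+\partial_\alpha\psi_2$ under degeneration --- so it is essentially the same route the cited sources take, and it is correct. (One small quibble: the finiteness of the triangle counts in a fixed $\mathrm{Spin}^c$ class comes from admissibility alone, not from any lower bound on $A_3(\psi)$; the Novikov condition is only needed when one sums over infinitely many $\mathrm{Spin}^c$ structures.)
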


\begin{lem}\label{lem:CutReglue}
Suppose $F$ is a closed surface in a closed manifold $Y$, and
$F_0$ is a component of $F$ such that its genus $\ge2$. Let $Y'$
be the manifold obtained by cutting open $Y$ along $F_0$ and
regluing by a self-homeomorphism $\varphi$ of $F_0$. It is
well-known that $\varphi$ can be realized by a product of Dehn
twists along a set of curves $\mathcal C$ on $F_0$. Let $\omega$
be a $1$--cycle in $Y$ such that $\omega$ is disjoint from
$\mathcal C$, then $\omega$ can also be viewed as a $1$--cycle in
$Y'$. Then we have
$$\underline{HF^+}(Y,\omega,[F],\frac{x(F)}2;\Lambda)\cong \underline{HF^+}(Y',\omega,[F],\frac{x(F)}2;\Lambda).$$

Similarly, suppose $F$ is a Seifert-like surface for a knot $K$ in
a closed manifold $Y$, and $F_0$ is a component of $F$ such that
its genus $\ge2$. Let $Y'$ be the manifold obtained by cutting
open $Y$ along $F_0$ and regluing by a self-homeomorphism
$\varphi$, which can be realized by a product of Dehn twists along
a set of curves $\mathcal C$ on $F_0$. Let $\omega$ be a
$1$--cycle in $Y$ such that $\omega$ is disjoint from $\mathcal
C$, then $\omega$ can also be viewed as a $1$--cycle in $Y'$. The
new knot in $Y'$ is still denoted by $K$. Then we have
$$\underline{\widehat{HFK}}(Y,K,\omega,[F],\frac{x(F)+1}2;\Lambda)\cong \underline{\widehat{HFK}}(Y',K,\omega,[F],\frac{x(F)+1}2;\Lambda).$$
\end{lem}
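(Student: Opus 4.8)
The plan is to follow the strategy of \cite{OSzGenus} (compare \cite{NiClosed}): reduce to a single Dehn twist, realise a Dehn twist by a Dehn surgery, and then run the twisted surgery exact triangle of Proposition~\ref{prop:Exact}, arranging that its middle term vanishes by an adjunction argument. I will describe the proof of the first (closed surface) statement; the second is entirely parallel.

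First I would set up an induction on the number of Dehn twists. Writing $\varphi=\tau_{c_k}^{\pm1}\circ\cdots\circ\tau_{c_1}^{\pm1}$ with $c_1,\dots,c_k\in\mathcal C$, let $Y=Y^{(0)},Y^{(1)},\dots,Y^{(k)}=Y'$ be the manifolds obtained by regluing along the successive partial compositions. Each $Y^{(i)}$ still contains a copy of $F_0$ of genus $\ge 2$, the remaining curves $c_{i+1},\dots,c_k$ still lie on this copy, and $\omega$ — being disjoint from all of $\mathcal C$ — is disjoint from a neighbourhood of every $c_j$ and hence is a $1$--cycle in each $Y^{(i)}$; moreover regluing does not change the homeomorphism type of $F_0$, so the grading $x(F)/2$ is the same at every stage. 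Thus it suffices to treat $\varphi=\tau_c^{\pm1}$ for a single simple closed curve $c\subset F_0$. If $c$ bounds a disc in $F_0$ then $\tau_c$ is isotopic to the identity and $Y'\cong Y$ by a homeomorphism carrying $(F,\omega)$ to $(F,\omega)$, so there is nothing to prove; hence assume $c$ is essential.

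It is classical that cutting $Y$ along $F_0$ and regluing by $\tau_c^{\pm1}$ yields the manifold $Y_{\lambda_\sigma\pm\mu}(c)$ obtained by surgery on $c$ with slope the surface framing $\lambda_\sigma$ (the one keeping the longitude on $F_0$) shifted by one meridian; in $Y'$ the surface $F_0$ persists with the same topology, the annulus $F_0\cap N(c)$ being replaced by a twisted annulus in the surgery solid torus. Now apply the twisted surgery exact triangle of Proposition~\ref{prop:Exact} to the framed knot $(c,\lambda_\sigma)$: its three terms are $\underline{HF^+}(Y,\omega;\Lambda)$, $\underline{HF^+}(Y_{\lambda_\sigma}(c),\omega;\Lambda)$ and $\underline{HF^+}(Y',\omega;\Lambda)$, the connecting maps being induced by the three $2$--handle cobordisms, each of which contains $\omega\times I$. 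I would then check that this triangle is compatible with the splitting of $\underline{HF^+}$ by the evaluation against the surface class: in each of the three cobordisms the surface $F_0$ at one end is homologous to the corresponding surface at the other end — namely $\widehat F_0$, the compression of $F_0$ along $c$, in $Y_{\lambda_\sigma}(c)$, and the reglued copy $F_0'\cong F_0$ in $Y'$ — via a $3$--chain built from $(F_0\setminus N(c))\times I$ together with a filling in a neighbourhood of the $2$--handle, which exists because that neighbourhood is acyclic. Hence the triangle restricts to an exact triangle of the summands in grading $x(F)/2$.

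It remains to see that the middle summand $\underline{HF^+}(Y_{\lambda_\sigma}(c),\omega,[F],\tfrac{x(F)}2;\Lambda)$ is zero. In $Y_{\lambda_\sigma}(c)$ the relevant class is carried by $\widehat F=\widehat F_0\sqcup(F\setminus F_0)$, and because $g(F_0)\ge 2$ and $c$ is essential, $\widehat F_0$ has no sphere component and $-\chi(\widehat F_0)=-\chi(F_0)-2$, whether $c$ is separating or non-separating in $F_0$; therefore $x([F])\le x(\widehat F)=x(F)-2$ in $Y_{\lambda_\sigma}(c)$, so $x(F)/2>x([F])/2$. The adjunction inequality for twisted Heegaard Floer homology (which holds by the argument of \cite{OSzGenus}, cf. \cite{OSzAnn2}) then forces $\underline{HF^+}(Y_{\lambda_\sigma}(c),\omega,[F],\tfrac{x(F)}2;\Lambda)=0$. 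Feeding this into the refined exact triangle $\cdots\to 0\to\underline{HF^+}(Y',\omega,[F],\tfrac{x(F)}2)\to\underline{HF^+}(Y,\omega,[F],\tfrac{x(F)}2)\to 0\to\cdots$ yields the asserted isomorphism, completing the induction. The second statement is proved in the same way, using the surgery exact triangle for twisted knot Floer homology under surgery on the curve $c\subset F_0$ (which is disjoint from $K$) in place of Proposition~\ref{prop:Exact}, and the knot Floer homology genus bound in place of the adjunction inequality (cf. \cite{NiFibred,AiNi}). The step I expect to require the most care is the compatibility of the exact triangle with the surface grading, i.e.\ producing the $3$--chains above and verifying that the $2$--handle maps respect the induced decomposition; everything else is routine.
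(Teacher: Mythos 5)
Your proof is correct and carries out precisely the argument the paper indicates when it says the lemma is ``a standard application of the surgery exact sequence and the adjunction inequality'': you reduce to a single Dehn twist, realise it as $\pm1$-surgery on $c\subset F_0$ with respect to the surface framing, and run the twisted surgery exact triangle of Proposition~\ref{prop:Exact}, killing the middle term $\underline{HF^+}(Y_{\lambda_\sigma}(c),\omega,[F],x(F)/2;\Lambda)$ by adjunction since compressing $F_0$ along $c$ drops the norm by $2$ (here $g(F_0)\ge 2$ is used so that no sphere component appears). The points you flag as needing care (compatibility of the triangle with the surface grading, and the knot Floer analogue for the second statement) are exactly the routine checks the paper is suppressing.
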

\begin{proof}
The proof is a standard application of the surgery exact sequence
and the adjunction inequality.
\end{proof}

\subsection{The presence of a non-separating sphere}

When there is a non-separating two-sphere, we have the following
properties from \cite{NiClosed}.

\begin{lem}{\bf\cite[Lemma 2.1]{NiClosed}}\label{lem:TrivialS2}
Suppose $Y$ contains a non-separating two-sphere $S$,
$\omega\subset Y$ is a closed curve such that $\omega\cdot S\ne0$.
We then have
$$\underline{\widehat{HF}}(Y,\omega;\Lambda)=0,\quad\underline{HF^+}(Y,\omega;\Lambda)=0.$$
\end{lem}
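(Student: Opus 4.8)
The plan is to exploit the fact that a non-separating sphere $S$ with $\omega\cdot S\neq 0$ forces a contradiction with the adjunction inequality for twisted Heegaard Floer homology, combined with the exactness of a surgery sequence. First I would reduce to the homological situation: since the twisted invariants depend only on the homology class of $\omega$ in $H_1(Y;\mathbb{R})$ by Proposition~\ref{prop:TwistInv}, and on the $\mathrm{Spin}^c$ decomposition $\underline{HF}^+(Y,\omega;\Lambda)=\bigoplus_{\mathfrak{s}}\underline{HF}^+(Y,\omega,\mathfrak{s};\Lambda)$, it suffices to show each $\mathrm{Spin}^c$ summand vanishes. The geometric input is that $S$ non-separating means there is a dual circle $\gamma$ with $\gamma\cdot S=1$; after rescaling we may take $\omega=\gamma$ up to adding curves disjoint from everything, so $\langle c_1(\mathfrak{s}),[S]\rangle$ is not what obstructs us—rather, the pairing of $\omega$ with $[S]$ is.

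The key mechanism is the following. Writing $Y=Y'\cup(S^1\times S^2)$-type picture is not available in general, but one can surger $S$: let $Y_1$ be obtained by doing $0$-surgery on the dual knot (or equivalently, $Y$ is obtained from a simpler manifold by a $1$-handle/$2$-handle pair along $S$). Concretely, pick a knot $B\subset Y$ meeting $S$ once; then $S$ compresses to a sphere that bounds in the surgered manifold, and a surgery exact triangle (Proposition~\ref{prop:Exact}) relates $\underline{HF}^+(Y,\omega;\Lambda)$ to the twisted Floer homology of two other manifolds. Because $\omega\cdot S\neq 0$, the twisting class $\omega$ pairs nontrivially with the class of the sphere in each of these manifolds, and the adjunction inequality (a sphere is a genus-$0$ surface, so \emph{any} $\mathrm{Spin}^c$ structure violates adjunction once the sphere is nontrivial in twisted homology—this is exactly the phenomenon that twisted coefficients make the $S^1\times S^2$ Floer homology vanish) kills the relevant groups, and exactness propagates the vanishing to $Y$ itself. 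The same argument with $\underline{\widehat{CF}}$ in place of $\underline{CF}^+$ gives the $\widehat{HF}$ statement; alternatively one deduces it from the $HF^+$ statement via the exact triangle relating $\widehat{HF}$, $HF^+$ and $HF^+$ (multiplication by $U$).

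The cleanest route, which I expect is what the paper does, is to observe that $\underline{HF}^+(S^1\times S^2,\omega;\Lambda)=0$ whenever $\omega$ is the $S^1$ factor (the twisted differential $T^{\pm 1}-1$ is invertible over the field $\Lambda$), and then to handle general $Y$ containing a non-separating $S$ by a handle-slide/connected-sum argument: cutting $Y$ along $S$ and regluing, or using that $Y$ admits a self-diffeomorphism pushing things into a summand, together with Künneth-type behavior (Proposition~\ref{prop:Kunneth}). The main obstacle is that $Y$ need not literally contain an $S^1\times S^2$ summand—the sphere is non-separating but $Y$ may be prime—so the argument must be the adjunction-inequality one applied directly in $Y$: one uses that $[S]\neq 0$ in $H_2(Y;\mathbb{Z})$ and that the cobordism map or the chain-level structure detects $\omega\cdot S\neq 0$ as a nonzero power of $T$ appearing in an otherwise-contractible summand. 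I would therefore organize the proof as: (1) invoke Proposition~\ref{prop:TwistInv} to pass to homology classes; (2) set up the surgery triangle along a knot dual to $S$; (3) identify the two auxiliary terms as twisted Floer homologies in which the image sphere has $\omega$-twisting nonzero and genus $0$, hence vanish by adjunction with twisted coefficients; (4) conclude by exactness, and deduce the $\widehat{HF}$ version by the standard long exact sequence.
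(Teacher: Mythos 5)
Your ``cleanest route'' paragraph actually has the right idea, and the mistake is that you then talk yourself out of it. You write that ``$Y$ need not literally contain an $S^1\times S^2$ summand---the sphere is non-separating but $Y$ may be prime,'' and this is simply false. If $S\subset Y$ is a non-separating $2$--sphere, take a simple closed curve $\gamma$ meeting $S$ transversally in a single point (it exists by Poincar\'e duality); a regular neighborhood of $S\cup\gamma$ is $S^2\times I$ with a $1$--handle attached connecting the two boundary spheres, which is exactly $(S^1\times S^2)\setminus B^3$. Hence $Y\cong Y'\#(S^1\times S^2)$ for some closed $Y'$, and $Y$ is never prime in this situation. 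This collapses the ``main obstacle'' you describe, and the Künneth-plus-$S^1\times S^2$ route you sketched (and then abandoned) is the actual proof: on the genus-one summand of a Heegaard diagram for $Y'\#(S^1\times S^2)$, the $\alpha$-- and $\beta$--curves are isotopic and can be arranged to meet in two points with two cancelling bigons; the hypothesis $\omega\cdot S\neq 0$ forces the two bigons to acquire different powers $T^a$, $T^b$ with $a\neq b$, so the twisted differential on that tensor factor is $T^a-T^b$, a unit of the Novikov field $\Lambda$, and the whole complex is acyclic.

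The fallback argument you actually commit to---a surgery triangle along a knot dual to $S$ together with ``adjunction for genus-$0$ surfaces''---does not hold up as written. There is no adjunction inequality that kills a $\mathrm{Spin}^c$ summand merely because a sphere has nonzero homology class; what kills it is precisely the twisted $S^1\times S^2$ computation, so invoking ``adjunction'' here is circular with the very thing being established. You also do not specify the framing or check that both auxiliary terms of the triangle vanish, and at least one of them will typically contain $S$ again with the same twisting, so exactness gives no traction. Note also that the paper under review does not reprove this lemma; it is quoted from [NiClosed, Lemma 2.1], and the proof there is the connected-sum/Heegaard-diagram argument above, not a surgery triangle.
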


\begin{lem}{\bf\cite[Lemma 5.1]{NiClosed}}\label{lem:0surgery}
Suppose $Y$ is a closed $3$--manifold containing a non-separating
two-sphere $S$, $K\subset Y$ is a null-homologous knot, $F$ is a
 Seifert-like surface for $K$. Let $Y_0(K)$ be the
manifold obtained by doing $0$--surgery on $K$, and let
$\widehat{F}$ be the extension of $F$ in $Y_0(K)$. Let
$\omega\subset Y-K$ be a $1$--cycle such that $\omega\cdot S\ne0$.
We then have
$$\underline{\widehat{HFK}}(Y,K,\omega,[F],\frac{x(F)+1}2;\Lambda)\cong \underline{HF^+}(Y_0(K),\omega,[\widehat F],\frac{x(F)-1}2;\Lambda).$$
\end{lem}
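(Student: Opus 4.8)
The plan is to identify both sides with twisted Floer homology of a large surgery and then invoke the adjunction-type vanishing of Lemma~\ref{lem:TrivialS2}. First I would recall that, by Proposition~\ref{prop:LargeSurg} applied to the framing $\lambda$ coming from the Seifert-like surface $F$, for all sufficiently large $m$ and each $\mathfrak t\in\mathrm{Spin}^c(Y_{m\mu+\lambda}(K))$ we have an isomorphism
$$\underline{\widehat{CF}}(Y_{m\mu+\lambda}(K),\mathfrak t,\omega;\Lambda)\cong\widehat A_{\Xi(\mathfrak t)}(Y,K,\omega),$$
where $\widehat A_{\xi}(Y,K,\omega)=\underline{C}_{\xi}\{\max\{i,j\}=0\}$. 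On the side of the knot complex, the subquotient complex $\underline{C}_{\xi}\{\max\{i,j\}=0\}$ fits into a short exact sequence with $\underline{C}_{\xi}\{i=0\}$ and $\underline{C}_{\xi}\{j=0\}$, which by Lemma~\ref{lem:Forget} compute $\underline{\widehat{CF}}(Y,G_{Y,K}(\xi);\omega)$ and $\underline{\widehat{CF}}(Y,G_{Y,-K}(\xi);\omega)$ respectively. The point is to track the relevant $\mathrm{Spin}^c$/Alexander grading: restricting to the top Alexander grading $j=\frac{x(F)+1}{2}$ on the knot side corresponds, under $\Xi$, to a single large-surgery $\mathrm{Spin}^c$ structure $\mathfrak t_0$, and the extremal piece $\underline{C}_{\xi}\{\max\{i,j\}=0\}$ in that grading reduces to $\underline{\widehat{HFK}}(Y,K,\omega,[F],\frac{x(F)+1}{2};\Lambda)$, because the other filtration level contributes nothing in that grading.

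Second, I would identify the large-surgery manifold with $Y_0(K)$ at the level of the relevant $\mathrm{Spin}^c$ structure. Since $m\mu+\lambda$ surgery and $0$ (i.e. $\lambda$) surgery differ by a single $2$--handle slide, the mapping cone / surgery formula (or directly the argument of \cite[Theorem~4.4]{OSzRatSurg}) lets one match the twisted group $\underline{\widehat{CF}}(Y_{m\mu+\lambda}(K),\mathfrak t_0,\omega;\Lambda)$ with $\underline{HF^+}(Y_0(K),\omega,\mathfrak s;\Lambda)$ in the $\mathrm{Spin}^c$ structure $\mathfrak s$ with $\langle c_1(\mathfrak s),[\widehat F]\rangle=x(F)-1$ — this is exactly the grading shift $\frac{x(F)+1}{2}\rightsquigarrow\frac{x(F)-1}{2}$ in the statement, coming from the fact that capping off with a disk drops the Thurston norm contribution by one and from the $n_w,n_z$ bookkeeping in \eqref{eq:Xi}. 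The non-separating sphere $S$ survives into $Y_0(K)$ (since $K$ is disjoint from $S$, being null-homologous and $Y-K$ irreducible forces $S$ to remain non-separating), and $\omega\cdot S\ne 0$ there as well, so Lemma~\ref{lem:TrivialS2} gives $\underline{HF^+}(Y_0(K),\omega;\Lambda)=0$ and $\underline{\widehat{HF}}(Y,\omega;\Lambda)=0$ — but we should \emph{not} use outright vanishing, since the two sides of the claimed isomorphism need not both be zero; rather the sphere is used only to control the auxiliary terms.

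The cleaner route, and the one I would actually write, is to run the adjunction-flavoured argument of \cite{OSzGenus} directly: the short exact sequence of complexes
$$0\to \underline{C}_{\xi}\{i<0,\,j=0\}\to \underline{C}_{\xi}\{\max\{i,j\}=0\}\to \underline{C}_{\xi}\{i=0\}\to 0$$
induces a long exact sequence in homology, and in the extremal Alexander grading $[F],\frac{x(F)+1}{2}$ the subcomplex term is (up to a grading shift) $\underline{C}_{\xi'}\{j<0,i=0\}$, which computes a piece of $\underline{HF^-}$-type information for $Y$ in a non-extremal grading; combined with Proposition~\ref{prop:LargeSurg} and the identification of $\widehat A$ with $\underline{HF^+}(Y_0(K))$ in the appropriate $\mathrm{Spin}^c$ structure, one reads off the stated isomorphism. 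The main obstacle is the grading bookkeeping: carefully checking that the Alexander grading $\frac{x(F)+1}{2}$ on $\underline{\widehat{HFK}}(Y,K,\omega)$ matches the evaluation $\frac{x(F)-1}{2}$ of $c_1$ on $[\widehat F]$ under $\Xi$, and that in this extremal grading the error terms in the large-surgery isomorphism genuinely vanish (this is where one needs the Alexander grading to be extremal, so that $\underline{C}_{\xi}\{i=0,j>0\}=0$). Everything else — the surgery exact triangles, the handle-slide invariance, and the $\mathrm{Spin}^c$-bookkeeping of $\omega\times I$ in the cobordisms — is standard and parallels \cite{OSzRatSurg,OSzGenus,NiClosed} verbatim once the twisted coefficients are carried along, which Propositions~\ref{prop:TwistInv}--\ref{prop:LargeSurg} license.
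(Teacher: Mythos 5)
The paper gives no proof of this lemma; it is cited from \cite[Lemma~5.1]{NiClosed}. So I will assess your sketch on its own terms, and here there is a concrete error that unravels the rest.

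You assert that ``$K$ is disjoint from $S$\dots so $S$ survives into $Y_0(K)$'' and then that Lemma~\ref{lem:TrivialS2} gives $\underline{HF^+}(Y_0(K),\omega;\Lambda)=0$. This is backwards. In the applications (Theorem~\ref{thm:PropG}), $Y-K$ is irreducible while $Y$ is reducible, so every essential sphere in $Y$ \emph{must} intersect $K$. Hence $S$ does not live in $Y-K$ and does not persist into $Y_0(K)$, and there is no reason for $\underline{HF^+}(Y_0(K),\omega;\Lambda)$ to vanish. Indeed, the whole point of the lemma is that the right-hand side is a nontrivial invariant: in the proof of Theorem~\ref{thm:PropG} the paper concludes $\underline{HF^+}(Y_0(K),\omega,[\widehat F],\frac{x(F)-1}{2};\Lambda)\ne0$. (In the degenerate case where $S$ happens to miss $K$, both sides of the lemma do vanish by the connected-sum argument behind Lemma~\ref{lem:TrivialS2}, but that is not the case of interest.) Your hedge ``we should not use outright vanishing'' correctly senses that something is off, but it misdiagnoses it: the issue is not that both sides are nonzero despite some vanishing, it is that your premise about $S$ persisting into $Y_0(K)$ is false.

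The vanishing that \emph{is} used is for the ambient manifold $Y$ itself: Lemma~\ref{lem:TrivialS2} gives $\underline{HF^+}(Y,\omega;\Lambda)=0$, and this is exactly what collapses the surgery exact triangle of Proposition~\ref{prop:Exact} into an isomorphism between the groups attached to $Y_0(K)$ and to a large surgery $Y_n(K)$ in the relevant $\mathrm{Spin}^c$ structures; combined with Proposition~\ref{prop:LargeSurg} (which identifies the large-surgery group with $\widehat A_\xi$, and in the extremal Alexander grading with $\underline{\widehat{HFK}}$), one obtains the stated isomorphism. By arguing against using this vanishing you remove the only tool in the paper that links $Y_0(K)$ to the knot complex, and your ``cleaner route'' simply asserts ``the identification of $\widehat A$ with $\underline{HF^+}(Y_0(K))$ in the appropriate $\mathrm{Spin}^c$ structure'' without any justification --- that identification is precisely what the exact triangle plus vanishing establishes. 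Note also that in the untwisted setting the analogous isomorphism (\cite[Corollary~4.5]{OSzKnot}) holds for genus $\ge2$ without any sphere hypothesis; the role of the non-separating sphere here is exactly to remove that genus restriction and to allow (possibly disconnected) Seifert-like surfaces of arbitrary Thurston norm, which the paper needs. Your outline, once the erroneous claim about $\underline{HF^+}(Y_0(K),\omega;\Lambda)=0$ is stripped away, is essentially the untwisted $g\ge2$ argument and does not address why the conclusion holds in the cases the sphere hypothesis is there to cover.
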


\subsection{The topmost nontrivial term}

In this subsection we will prove some nontriviality results
following the approach in \cite{OSzGenus}. Although it is possible
to prove stronger results, we are satisfied with the current
version since it is sufficient for our purpose. We also cite a
result about twisted Floer homology and fibered knots.

\begin{lem}\label{lem:SymplFil}
Suppose $Y$ is a closed $3$--manifold with a taut foliation
$\mathscr F$ which is smooth except possibly along some compact
leaves. Then $\mathscr F$ can be approximated by a positive
contact structure $\xi_+$ and a negative contact structure $\xi_-$
, and there is a nonempty open subset $U^*\subset H^2(Y;\mathbb
R)$ with the following property: for any $h\in U^*$, there exists
a symplectic form $\Omega$ on $Y\times[-1,1]$, such that
$[\Omega]=h\in H^2(Y;\mathbb R)$, $\Omega|_{Y\times\{\pm1\}}$ is
everywhere positive on $\xi_{\pm}$.
\end{lem}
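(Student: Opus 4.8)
The plan is to assemble three classical ingredients: the Eliashberg--Thurston perturbation of a taut foliation into contact structures of both signs; the characterization (Sullivan, Rummler) of tautness of a codimension--one foliation by a closed $2$--form positive on the leaves; and the Thurston--Eliashberg construction of a symplectic form on $Y\times[-1,1]$ that tames the foliation near the central slice and the two contact structures near the two ends. The cohomology--class statement will then fall out of the openness of the ``domination'' condition.

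\emph{Step 1 (the contact approximations).} Write $T\mathscr F=\ker\alpha$ for a global defining $1$--form, which is $C^\infty$ away from the finitely many compact leaves. I would invoke Eliashberg--Thurston: a $C^0$--small deformation of $\alpha$ through a $1$--parameter family $\alpha^s$ produces $1$--forms $\alpha_+$ with $\alpha_+\wedge d\alpha_+>0$ and $\alpha_-$ with $\alpha_-\wedge d\alpha_-<0$; set $\xi_\pm=\ker\alpha_\pm$. To cover foliations that are only smooth off a union of compact leaves one uses the appropriate refinement of the Eliashberg--Thurston theorem in that setting (equivalently, one arranges the deformation to be supported away from the singular leaves, as is possible for foliations coming from Gabai's construction). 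What matters afterwards is that $\xi_\pm$ are positive/negative contact structures whose plane fields are $C^0$--close to $T\mathscr F$.

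\emph{Step 2 (a closed dominating form, and the open set $U^*$).} Since $\mathscr F$ is taut, there is a closed $2$--form $\omega_0$ on $Y$ that restricts to a positive area form on every tangent plane of $\mathscr F$. Positivity of a $2$--form on a plane field is an open $C^0$ condition, so there is $\varepsilon>0$ with the property that for every closed $2$--form $\eta$ with $\|\eta\|_{C^0}<\varepsilon$ the form $\omega_0+\eta$ is still positive on $T\mathscr F$; shrinking $\varepsilon$ and using the $C^0$--closeness of $\xi_\pm$ to $T\mathscr F$ from Step 1, $\omega_0+\eta$ is also positive on $\xi_+$ and on $\xi_-$. Hence the set of classes in $H^2(Y;\mathbb R)$ represented by a closed $2$--form positive on both $\xi_+$ and $\xi_-$ contains an open neighborhood of $[\omega_0]$; this neighborhood is the required $U^*$.

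\emph{Step 3 (building $\Omega$).} Fix $h\in U^*$ and a closed $2$--form $\omega_h$ representing $h$ that is positive on $T\mathscr F$ and on $\xi_\pm$. On $Y\times[-1,1]$ with coordinate $t$, I would pick a smooth path $\beta_t$ of $1$--forms on $Y$, obtained from the deformation of Step 1, with $\beta_t=\alpha$ for $|t|\le\tfrac12$ and $\beta_t=\alpha_\pm$ near $t=\pm1$, and set $\Omega=\omega_h+d(t\,\beta_t)$. Along each slice the wedge square reduces to the leading term $\omega_h\wedge\beta_t\wedge dt$ plus corrections of order $|t|$ (involving $d_Y\beta_t$ and $\partial_t\beta_t$); because $T\mathscr F$, $\xi_+$ and $\xi_-$ are all dominated by $\omega_h$, the leading term is a positive $4$--form, so after rescaling the $[-1,1]$--factor (shrinking the support of $t\beta_t$) $\Omega$ is symplectic on all of $Y\times[-1,1]$. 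On the slice $Y\times\{\pm1\}$ the form $\Omega$ restricts to $\omega_h\pm d\alpha_\pm$; restricting further to $\xi_\pm=\ker\alpha_\pm$, the term $\omega_h|_{\xi_\pm}$ is positive by the choice of $\omega_h$ and the term $\pm d\alpha_\pm|_{\xi_\pm}$ is positive by the contact condition (this is exactly why the factor $t$ is inserted, so its sign flips to match the two signs of the contact structures), so $\Omega|_{Y\times\{\pm1\}}$ is everywhere positive on $\xi_\pm$. Finally $\Omega|_{Y\times\{0\}}=\omega_h$, and since restriction to a slice is an isomorphism $H^2(Y\times[-1,1];\mathbb R)\cong H^2(Y;\mathbb R)$, we conclude $[\Omega]=h$.

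\emph{Expected main obstacle.} The only genuinely delicate point is Step 1 in the stated generality: pushing the Eliashberg--Thurston perturbation through for a foliation that is merely smooth off a union of compact leaves, while retaining enough control (not just $C^0$, but on $\alpha_\pm$ and $d\alpha_\pm$ near those leaves) for the taming inequality of Step 3 to survive there. By contrast Steps 2 and 3 are soft: the dominating $2$--form is the classical consequence of tautness, and the claim about $U^*$ is just the openness of the domination condition together with $C^0$--closeness of $\xi_\pm$ to $T\mathscr F$.
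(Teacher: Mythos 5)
Your proposal is correct and follows essentially the same route as the paper: invoke Eliashberg--Thurston to obtain $\xi_\pm$, then exploit the openness of the symplectic/domination condition to produce the open set $U^*$. The only difference is one of packaging: the paper cites \cite{ET} as a black box for the existence of the symplectic form $\Omega$ on $Y\times[-1,1]$ and then perturbs $\Omega$ by a small closed $2$--form on the product, whereas you unpack that black box by starting from a Sullivan dominating $2$--form $\omega_0$, perturbing it on $Y$ to any nearby class $h$, and rebuilding $\Omega=\omega_h+d(t\beta_t)$ by hand; since $H^2(Y\times[-1,1];\mathbb R)\cong H^2(Y;\mathbb R)$, the two perturbation arguments are interchangeable.
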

\begin{proof}
By \cite{ET}, we can approximate $\mathscr F$ by a positive
contact structure $\xi_+$ and a negative contact structure
$\xi_-$, and there exists a symplectic form $\Omega$ on
$Y\times[-1,1]$, $\Omega|_{Y\times\{\pm1\}}$ is everywhere
positive on $\xi_{\pm}$. Now if we perturb $\Omega$ by a small
closed $2$--form on $Y\times[-1,1]$, we still get a symplectic
form which is everywhere positive on $\xi_{\pm}$. This finishes
the proof.
\end{proof}

\begin{thm}\label{thm:TwistNorm}
Suppose $Y$ is a closed irreducible $3$--manifold, $F$ is a taut
surface in $Y$. Then there exists a nonempty open set $U\subset
H_1(Y;\mathbb R)$, such that for any $\omega\in U$,
$$\underline{HF^+}(Y,\omega,[F],\frac12x(F);\Lambda)\ne0, \quad \underline{\widehat{HF}}(Y,\omega,[F],\frac12x(F);\Lambda)\ne0.$$
\end{thm}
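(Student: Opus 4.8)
The plan is to follow the strategy of Ozsváth–Szabó in \cite{OSzGenus} for detecting the Thurston norm via Heegaard Floer homology, adapted to the twisted setting. The starting point is to reduce to the case where $F$ is connected and where we may apply a symplectic filling argument. First I would observe that by Gabai's theorem a taut surface $F$ in the irreducible manifold $Y$ can be realized as a compact leaf of a taut (finite-depth) foliation $\mathscr F$ of $Y$; more precisely, $F$ is homologous to a union of leaves, and after passing to a branched cover or using the smoothing results one arranges $\mathscr F$ to be smooth away from finitely many compact leaves so that Lemma~\ref{lem:SymplFil} applies. This produces, for $h$ in a nonempty open set $U^*\subset H^2(Y;\mathbb R)$, a symplectic form $\Omega$ on $W=Y\times[-1,1]$ with $[\Omega]=h$ and $\Omega$ positive on the contact structures $\xi_\pm$ approximating $\mathscr F$ on the two boundary components.

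Next I would invoke the relationship between symplectic/Stein fillings and nonvanishing of the (twisted) contact invariant. The symplectic cobordism $(W,\Omega)$ from $(Y,\xi_+)$ to $(Y,\xi_-)$ — more precisely, after capping one end with a Stein filling, a symplectic filling of $(-Y,\xi_-)$ — induces a map on twisted Heegaard Floer homology which, by the twisted version of the Ozsváth–Szabó nonvanishing theorem for contact invariants of fillable contact structures, is nontrivial on the relevant $\mathrm{Spin}^c$ summand. The key point is that the $\mathrm{Spin}^c$ structure carried by $\xi_\pm$ evaluates on $[F]$ so as to pick out exactly the extremal Alexander grading $\frac12 x(F)$, by the adjunction-type identity $\langle c_1(\mathfrak s_{\xi}), [F]\rangle = -x(F)$ for the plane field associated to the foliation with $F$ as a leaf. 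Tracking the $\Lambda$-coefficients through this argument: the $1$-cycle $\omega$ enters through the pairing with $h=[\Omega]$, and one chooses $\omega$ in the open set $U\subset H_1(Y;\mathbb R)$ Poincaré-dual (via the cobordism) to the set $U^*$ where the symplectic form exists. This forces $\underline{HF^+}(Y,\omega,[F],\frac12 x(F);\Lambda)\ne 0$, and the corresponding statement for $\underline{\widehat{HF}}$ follows from the long exact sequence relating $\widehat{HF}$, $HF^+$, and the $U$-action, since a nonzero element in the top Alexander grading of $HF^+$ that is in the kernel of $U$ (which one may arrange, or extract) lifts to a nonzero element of $\widehat{HF}$ in the same grading.

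The main obstacle is the construction and control of the foliation together with the openness of the set $U$. There are two subtleties: first, producing a taut foliation with $F$ as a compact leaf that is smooth enough for the Eliashberg–Thurston approximation and the Etnyre symplectic filling results (this is where the hypothesis on $F$ being taut, rather than merely norm-minimizing, and the passage to a possibly branched cover or a careful smoothing come in); second, ensuring that the set of $\omega\in H_1(Y;\mathbb R)$ that arise is genuinely open and nonempty — this comes from the openness of $U^*\subset H^2(Y;\mathbb R)$ in Lemma~\ref{lem:SymplFil} together with the fact that the cobordism map depends on $[\Omega]$ in a controlled (linear) way through the twisting exponent, but one must check that perturbing $\omega$ keeps it in general position and does not destroy the nonvanishing. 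I expect the openness to follow formally once the single-$\omega$ nonvanishing is established, so the real work is the foliation-to-symplectic-filling input, which is essentially imported from \cite{ET} and the contact-geometric nonvanishing theorems as in \cite{OSzGenus}, now with twisted coefficients.
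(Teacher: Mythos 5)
Your proposal is correct and follows essentially the same route as the paper: Gabai's construction of a taut foliation with $F$ as a union of compact leaves, the Eliashberg--Thurston--Etnyre approximation producing a symplectic form on $Y\times[-1,1]$ in a nonempty open set of cohomology classes (Lemma~\ref{lem:SymplFil}), the Ozsv\'ath--Szab\'o argument from \cite[Section~4]{OSzGenus} carried over to twisted coefficients for the $\underline{HF^+}$ nonvanishing, and the $U$-action argument to deduce nonvanishing of $\underline{\widehat{HF}}$. Your mention of a possible branched cover is unnecessary (Gabai's theorem already gives the required smoothness away from toral leaves), but this does not affect the argument.
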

\begin{proof}
By \cite{G1}, there exists a taut foliation $\mathscr F$ of $Y$,
such that $F$ is a union of compact leaves of $\mathscr F$, and
$\mathscr F$ is smooth except possibly along toral components of
$F$. By Lemma~\ref{lem:SymplFil} we have a nonempty open subset
$U^*\subset H^2(Y;\mathbb R)$ with the property stated there. Let
$U\subset H_1(Y;\mathbb R)$ be the dual of $U^*$. Now for any
$\omega\in U$, the argument in \cite[Section 4]{OSzGenus} shows
that $\underline{HF^+}(Y,\omega,[F],\frac12x(F);\Lambda)\ne0$.

If $\underline{\widehat{HF}}(Y,\omega,[F],\frac12x(F);\Lambda)=0$,
then the map
$$U\co \underline{HF^+}(Y,\omega,[F],\frac12x(F);\Lambda)\to \underline{HF^+}(Y,\omega,[F],\frac12x(F);\Lambda)$$
is an isomorphism. Since
$\underline{HF^+}(Y,\omega,[F],\frac12x(F);\Lambda)\ne0$ and
$U^na=0$ for any
$a\in\underline{HF^+}(Y,\omega,[F],\frac12x(F);\Lambda)$ and
sufficiently large $n$, we get a contradiction.
\end{proof}

The following lemma will be used in the proof of
Theorem~\ref{thm:TwistGenus}.

\begin{lem}\label{lem:LongiFol}
Suppose $K\subset Y$ is a null-homologous knot, $Y-K$ is
irreducible, $F$ is a taut Seifert-like surface for $K$. Let
$J\subset S^3$ be a fibered knot with fiber $G$, and let $F'$ be
the Seifert-like surface for $K\#J$ which is the boundary
connected sum of $F$ and $G$. Then if the genus of $J$ is
sufficiently large, $Y-\ond(K\#J)$ admits a smooth longitudinal
foliation such that $F'$ is a union of compact leaves.
\end{lem}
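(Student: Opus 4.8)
The plan is to cut $Y-\ond(K\#J)$ along a meridional annulus into the exteriors of $K$ and of $J$, foliate each piece, and glue. Recall that the exterior of a connected sum splits as $Y-\ond(K\#J)\cong\bigl(Y-\ond(K)\bigr)\cup_A\bigl(S^3-\ond(J)\bigr)$, where $A$ is an annulus which in each boundary torus is a regular neighbourhood of a meridian, the gluing being chosen so that the longitude of $K\#J$ restricts to the longitude of $K$ on one side and of $J$ on the other. Under this splitting $F'=F\natural G$ restricts to $F$ on the $K$-side and to $\widehat G:=G-\ond(J)$ on the $J$-side, the two halves joined along the single spanning arc $\delta=F'\cap A$; since $J$ is fibred, $\widehat G$ is a fibre of a fibration $S^3-\ond(J)\to S^1$.

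On the $J$-side I would take the foliation $\mathscr F_J$ by the fibres of this fibration: it is $C^\infty$, its leaves are all compact, $\widehat G$ is one of them, it meets $\partial\bigl(S^3-\ond(J)\bigr)$ transversally in a suspension foliation with longitudinal closed leaves, and its restriction to $A$ is a $C^\infty$ foliation of the annulus by arcs running across $A$ transverse to the core. On the $K$-side I would invoke Gabai's theorem \cite{G1}: since $F$ is a taut Seifert-like surface and $Y-K$ is irreducible, the sutured manifold obtained from $Y-\ond(K)$ with two parallel copies of $\partial F$ as sutures is taut, and decomposing it along $F$ begins a sutured-manifold hierarchy, which Gabai's construction turns into a taut foliation $\mathscr F_K$ of $Y-\ond(K)$ with $F$ a compact leaf and meeting the boundary torus transversally with longitudinal closed leaves. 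Isotoping $A$ into a spiralling region of $\mathscr F_K|_\partial$, the restriction $\mathscr F_K|_A$ is again a foliation of the annulus by arcs transverse to the core. Since such an arc foliation of an annulus is unique up to diffeomorphism, I may glue the two pieces along $A$ by a diffeomorphism carrying $\mathscr F_K|_A$ to $\mathscr F_J|_A$ and $\delta\cap F$ to $\delta\cap\widehat G$; the result is a taut, boundary-longitudinal foliation of $Y-\ond(K\#J)$ with $F'=F\cup_\delta\widehat G$ a compact leaf.

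The delicate point, and where the hypothesis that $g(J)$ is large enters, is global smoothness: along $A$ the two foliations can be matched to all orders, so the only possible loss of regularity is along the finitely many toral leaves that Gabai's foliation $\mathscr F_K$ of $Y-\ond(K)$ may be forced to carry, where it need only be $C^0$. To deal with this I would run Gabai's construction directly on $Y-\ond(K\#J)$ relative to $F'$: the fibred connected sum attaches the product sutured manifold $\widehat G\times I$ (that is, $S^3-\ond(J)$ cut open along its fibre) along a product portion of the boundary, and when $g(J)$ is large enough the hierarchy of $Y-\ond(K\#J)$ rel $F'$ can be chosen well-groomed, so that by \cite{G1} the resulting taut foliation is genuinely $C^\infty$, still longitudinal, and still has $F'$ as a leaf; keeping it equal to $\mathscr F_J$ on the $J$-side gives the conclusion. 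I expect this last smoothing step — showing a sufficiently large fibred connected sum lets one avoid the toral leaves that obstruct smoothness — to be the main obstacle; the rest is the routine cut-and-paste described above.
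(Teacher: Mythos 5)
Your decomposition of $Y - \ond(K\#J)$ into the exteriors of $K$ and of $J$ glued along a meridional annulus $A$ is correct, and taking the fibration foliation on the $J$--side is the right move. But the first half of your argument --- gluing two separately built foliations along $A$ --- is not really the path forward, and is more delicate than you indicate: matching two foliations along $A$ requires matching germs in a collar of $A$, and the boundary behaviour of a longitudinal foliation built by Gabai's construction on $Y-\ond(K)$ (a suspension with finitely many closed longitudinal leaves and spiralling in between) does not obviously line up with the product boundary behaviour of the fibration on the $J$--side. The cleaner route, which you sketch in your second paragraph, is to run a single sutured manifold hierarchy on all of $Y-\ond(K\#J)$ starting with the decomposition along $F'$; that is in fact what the paper's cited source \cite[Proposition~2.4]{NiNorm} does.

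The real problem is that you explicitly do not prove the step where the hypothesis on $g(J)$ is used. You write that you ``expect this last smoothing step --- showing a sufficiently large fibred connected sum lets one avoid the toral leaves that obstruct smoothness --- to be the main obstacle,'' and that is precisely where the whole proof lives: without it the lemma only yields a $C^0$ foliation, which is not enough for Lemma~\ref{lem:SymplFil}. Be careful, too, that your diagnosis of the obstruction is at best incomplete: Gabai's Theorem~5.1 of \cite{G1} produces a foliation that is $C^\infty$ except along toral components of $R(\gamma)$, and since $F$ is only a Seifert-\emph{like} surface it may carry closed toral components, which persist in $F'$ and are not removed by boundary connected summing the boundary component with a high-genus fibre $G$. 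The mechanism by which the Murasugi sum with a high-genus fibre surface produces a genuinely smooth longitudinal foliation --- involving first decomposing the sutured manifold along product pieces contributed by $G\times I$ and then invoking the appropriate smooth version of Gabai's construction --- is exactly the content of \cite[Proposition~2.4]{NiNorm}, and it is this argument that your proposal is missing rather than supplying.
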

\begin{proof}
The proof is the same as \cite[Proposition~2.4]{NiNorm}.
\end{proof}

\begin{thm}\label{thm:TwistGenus}
Suppose $K$ is a null-homologous knot in a closed 3--manifold $Y$,
$Y-K$ is irreducible. Let $F$ be a taut Seifert-like surface for
$K$. Then there exists a nonempty open set $U\subset H_1(Y;\mathbb
R)$, such that for any $\omega\in U$,
$$\underline{\widehat{HFK}}(Y,K,\omega,[F],\frac{x(F)+1}2;\Lambda)\ne0.$$
\end{thm}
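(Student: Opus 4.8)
The plan is to reduce the statement to the two already-established facts about twisted Heegaard Floer homology in this situation and to use the stabilization trick of Lemma~\ref{lem:LongiFol} to make the surface foliated. The difficulty is that $F$ by itself need not be a union of compact leaves of a smooth foliation on $Y-\ond(K)$, so we cannot directly invoke Theorem~\ref{thm:TwistNorm}; we first modify the knot.

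First I would fix a fibered knot $J\subset S^3$ with fiber $G$ of large genus and set $K'=K\#J$, with $F'=F\natural G$ the boundary connected sum, which is a taut Seifert-like surface for $K'$. If the genus of $J$ is large enough, Lemma~\ref{lem:LongiFol} gives a smooth longitudinal foliation of $Y-\ond(K')$ with $F'$ a union of compact leaves; capping off, one gets a taut foliation of $Y'_0(K')$ with $\widehat{F'}$ a union of compact leaves (here $Y'=Y\#S^3=Y$). Applying Lemma~\ref{lem:SymplFil} and the argument of Theorem~\ref{thm:TwistNorm} to the closed manifold $Y_0(K')$ and the taut surface $\widehat{F'}$, one obtains a nonempty open set $U'\subset H_1(Y_0(K');\mathbb{R})$ so that for $\omega\in U'$,
$$\underline{HF^+}(Y_0(K'),\omega,[\widehat{F'}],\tfrac12 x(\widehat{F'});\Lambda)\ne0.$$
Since $x(\widehat{F'})=x(F')-1$ (the surface has one more boundary component than an honest closed surface would, or rather capping a single boundary circle drops the norm by one in the usual way), this is exactly $\underline{HF^+}(Y_0(K'),\omega,[\widehat{F'}],\tfrac{x(F')-1}{2};\Lambda)\ne0$.

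Next I would transport this back to twisted knot Floer homology of $(Y,K')$. Here I need a non-separating sphere to even apply Lemma~\ref{lem:0surgery}: since $Y$ contains a non-separating sphere $S$ and $\omega$ can be chosen in $U'$ with $\omega\cdot S\ne0$ (the set of classes pairing nontrivially with $[S]$ is open and dense, hence meets the open set $U'$, after noting $\omega$ may be taken in $Y-K'$), Lemma~\ref{lem:0surgery} yields
$$\underline{\widehat{HFK}}(Y,K',\omega,[F'],\tfrac{x(F')+1}{2};\Lambda)\cong \underline{HF^+}(Y_0(K'),\omega,[\widehat{F'}],\tfrac{x(F')-1}{2};\Lambda)\ne0.$$
Finally I would remove the connected-sum factor $J$ using the twisted Künneth formula. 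Since $J$ is fibered of genus $g(J)$, the topmost knot Floer group $\widehat{HFK}(S^3,J,g(J))\cong\mathbb{Z}$ is a ``$U$-knot'' type summand, so Proposition~\ref{prop:Kunneth} (with $K_2=J$, $Y_2=S^3$) gives $\underline{CFK^\infty}(Y,K,\omega,\xi_1)\cong\underline{CFK^\infty}(Y\#S^3,K\#J,\omega,\xi_1\#\xi_2)$ as filtered complexes; passing to the topmost filtration level and taking homology identifies $\underline{\widehat{HFK}}(Y,K,\omega,[F],\tfrac{x(F)+1}{2};\Lambda)$ with $\underline{\widehat{HFK}}(Y,K',\omega,[F'],\tfrac{x(F')+1}{2};\Lambda)$, which is nonzero. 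Taking $U$ to be the (nonempty, open) preimage in $H_1(Y;\mathbb{R})$ of the relevant set of cycles finishes the proof.

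The main obstacle I expect is bookkeeping with the Spin$^c$ and $\mathbb{Z}\oplus\mathbb{Z}$-filtration data: making sure that the class $[F']$ and grading $\tfrac{x(F')+1}{2}$ on $(Y,K')$ really do correspond under the Künneth isomorphism to $[F]$ and $\tfrac{x(F)+1}{2}$ on $(Y,K)$ (using $x(F')=x(F)+2g(J)-1$, etc.), and checking that the $1$-cycle $\omega$, which lives away from $K$ already, is unaffected by all of these connected-sum and surgery operations. The homology-theoretic input, by contrast, is entirely off the shelf.
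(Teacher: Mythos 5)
Your proposal has a genuine gap: Theorem~\ref{thm:TwistGenus} does \emph{not} hypothesize that $Y$ contains a non-separating sphere. Its hypotheses are only that $K$ is a null-homologous knot in a closed $3$--manifold with $Y-K$ irreducible and $F$ taut. But you write ``since $Y$ contains a non-separating sphere $S$\dots'' and use this to invoke Lemma~\ref{lem:0surgery}. That lemma really does require the non-separating sphere, so without one the step has no justification. You have silently imported a hypothesis from Theorems~\ref{thm:PropG} and~\ref{thm:Cosmetic} (where the non-separating sphere \emph{is} assumed) into a general-position theorem that is stated and proved without it.

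The missing idea, and the genuinely new content of the paper's proof, is the cut-and-reglue step. After stabilizing to $K_1=K\#J$ with $J$ a fibered knot, the sutured manifold $(M,\gamma)$ obtained by cutting $Y$ along $F_1$ contains a non-separating product disk $D$. One then cuts $Y$ along $F_1$ and reglues by a self-diffeomorphism $\varphi$ so that $D\cap R_+(\gamma)$ is identified with $D\cap R_-(\gamma)$, producing a new knot $K'$ in a new manifold $Y'$ in which $D$ becomes a non-separating annulus whose boundary consists of meridians of $K'$; this yields a non-separating sphere $S\subset Y'$. Lemma~\ref{lem:0surgery} is applied in $Y'$, not in $Y$, and Lemma~\ref{lem:CutReglue} transfers the top-degree twisted Floer groups between $(Y,K_1)$ and $(Y',K')$ (and between their $0$-surgeries), provided $\omega$ is chosen disjoint from the regluing curves $\mathcal{C}$ and with $\omega\cdot S\neq 0$ in $Y'$ — this is why the proof tracks cycles through $H_1(Y-K_1-\mathcal{L};\mathbb{R})$ via the maps $\rho,\rho'$ and subtracts $(\rho')^{-1}(\mathcal{V})$. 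None of this is in your proposal, and without it the argument does not close in the stated generality. Secondarily, Proposition~\ref{prop:Kunneth} as stated requires $K_2$ to be a $U$--knot, which a high-genus fibered $J$ is not; the paper instead invokes the general connected-sum formula for knot Floer homology, so that citation in your proposal is also off, though the underlying Künneth idea is sound.
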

\begin{proof}
By Lemma~\ref{lem:LongiFol}, the complement of $K_1=K\#J$ admits a
smooth longitudinal foliation with a compact leaf $F'$. So
$Y_0(K_1)$ admits a taut smooth foliation with a compact leaf
$\widehat{F_1}$. By Theorem~\ref{thm:TwistNorm}, there exists a
nonempty open set $U_1\subset H_1(Y_0(K_1);\mathbb R)$, such that
for any $\omega\in U_1$,
$\underline{HF^+}(Y_0(K_1),\omega,[\widehat{F_1}],\frac{x(F_1)-1}2)\ne0$.

Let $(M,\gamma)$ be the sutured manifold obtained by cutting $Y$
open along $F_1$. Since $J$ is a fibered knot, $(M,\gamma)$
contains a non-separating product disk $D$. We can cut $Y$ open
along $F_1$ then reglue by a diffeomorphism $\varphi$ to get a new
knot $K'$ in a new manifold $Y'$, such that $D\cap R_+(\gamma)$
and $D\cap R_-(\gamma)$ are glued together. Now $D$ becomes a
non-separating annulus $A$ in the complement of $K'$, such that
$\partial A$ consists of two copies of the meridian of $K'$. So
$Y'$ contains a non-separating sphere $S$.

The diffeomorphism $\varphi$ can be realized by a product of Dehn
twists along a set  of curves $\mathcal C$ on $F_1$. In other
words, there exists a link $\mathcal L\subset Y-K_1$, such that a
Dehn surgery on $\mathcal L$ yields $Y'$. Let
\begin{eqnarray*}
&&\rho\co H_1(Y-K_1-\mathcal L;\mathbb R)\to H_1(Y-K_1;\mathbb
R)=H_1(Y_0(K_1);\mathbb R),\\
&&\rho'\co H_1(Y-K_1-\mathcal L;\mathbb R)\to H_1(Y'-K';\mathbb
R)=H_1(Y'_0(K');\mathbb R)
\end{eqnarray*}
 be the natural inclusion maps. Both
$\rho$ and $\rho'$ are surjective. Let $$\mathcal V\subset
H_1(Y'-K';\mathbb R)$$ be the codimension $1$ subspace defined by
$$v\cdot [S]=0.$$

Let $\omega$ be a $1$--cycle in $Y-K_1-\mathcal L$ such that
$\omega\in\rho^{-1}(U)-(\rho')^{-1}(\mathcal V)$. By
Lemma~\ref{lem:CutReglue}, we have
$$\underline{\widehat{HFK}}(Y,K_1,\rho(\omega),[F_1],\frac{x(F_1)+1}2)\cong
\underline{\widehat{HFK}}(Y',K',\rho'(\omega),[F_1],\frac{x(F_1)+1}2),$$
$$\underline{HF^+}(Y_0(K_1),\rho(\omega),[\widehat{F_1}],\frac{x(F_1)-1}2)\cong
\underline{HF^+}(Y'_0(K'),\rho'(\omega),[\widehat{F_1}],\frac{x(F_1)-1}2).
$$
By Lemma~\ref{lem:0surgery},
$$\underline{\widehat{HFK}}(Y',K',\rho'(\omega),[F_1],\frac{x(F_1)+1}2)\cong
\underline{HF^+}(Y'_0(K'),\rho'(\omega),[\widehat{F_1}],\frac{x(F_1)-1}2),$$
Hence
$\underline{\widehat{HFK}}(Y,K_1,\rho(\omega),[F_1],\displaystyle\frac{x(F_1)+1}2;\Lambda)\ne0$.

By the connected sum formula
$$\underline{\widehat{HFK}}(Y,K\#J,\rho(\omega);\Lambda)\cong \underline{\widehat{HFK}}(Y,K,\rho(\omega);\Lambda)\otimes{\widehat{HFK}(S^3,J;\mathbb R)},$$
we conclude that for any $\omega\in
\rho^{-1}(U)-(\rho')^{-1}(\mathcal V)$,
$\underline{\widehat{HFK}}(Y,K,\rho(\omega),g)\ne0$. Let
$$i_*\co H_1(Y_0(K);\mathbb R)=H_1(Y-K;\mathbb R)\to H_1(Y;\mathbb R)$$
be the natural map which is a projection. Let
$$U=i_*\rho\big(\rho^{-1}(U)-(\rho')^{-1}(\mathcal V)\big),$$ then
$U$ is the nonempty open set we need.
\end{proof}

The following result is a twisted version of a theorem due to
 Ghiggini \cite{Gh} and the author \cite{NiFibred}.

\begin{thm}{\bf\cite[Theorem 2.2]{NiClosed}}\label{thm:TwistFibre}
Suppose $K$ is a null-homologous knot in a closed, oriented,
connected 3--manifold $Y$, $Y-K$ is irreducible, and $F$ is a
genus $g$ Seifert surface for $K$. Let $\omega\subset Y-K$ be a
$1$--cycle. If
$$\underline{\widehat{HFK}}(Y,K,\omega,[F],g;\Lambda)\cong\Lambda,$$
then $K$ is fibered, and $F$ is a fiber of the fibration.
\end{thm}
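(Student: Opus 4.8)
The plan is to translate the statement into a rank computation in twisted sutured Floer homology over the Novikov field and then run the product--detection argument of \cite{NiFibred}, keeping track of the twisting throughout. First I would note that the hypothesis already forces $F$ to be taut: by the twisted adjunction inequality, $\underline{\widehat{HFK}}(Y,K,\omega,[F],s;\Lambda)$ vanishes for $s$ greater than the Seifert genus of $K$, so non-vanishing in the grading $g=\frac{x(F)+1}{2}$ means $F$ realizes the Seifert genus; since $Y-K$ is irreducible and $F$ is connected, $F$ is taut. Let $(M,\gamma)$ be the balanced sutured manifold obtained by cutting $Y-\ond(K)$, with its standard meridional sutures, open along $F$; it is taut and irreducible, and $H_2(M)=0$.

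Next I would set up twisted sutured Floer homology $\underline{SFH}(\cdot,\omega;\Lambda)$, that is, Juh\'asz's invariant computed from an admissible sutured Heegaard diagram with each holomorphic disk $\phi$ weighted by $T^{(\partial_\alpha\phi)\cdot\omega}$ exactly as in Section~2, and prove the twisted analogue of Juh\'asz's surface decomposition theorem: decomposing along a well-groomed surface that has been isotoped off $\omega$ identifies $\underline{SFH}$ of the decomposed manifold with a direct summand of $\underline{SFH}$ of the original. Applied to the decomposition of the sutured complement of $K$ along $F$, this yields
$$\underline{\widehat{HFK}}(Y,K,\omega,[F],g;\Lambda)\cong\underline{SFH}(M,\gamma,\omega;\Lambda)\cong\Lambda.$$
Since a product sutured manifold $R\times[0,1]$ has a sutured Heegaard diagram with a single generator, $\underline{SFH}(R\times[0,1],\omega;\Lambda)\cong\Lambda$ for every $R$ and $\omega$; combining this with the decomposition theorem and Gabai's sutured manifold hierarchy shows, exactly as in the untwisted case, that every taut balanced sutured manifold has $\underline{SFH}(\cdot,\omega;\Lambda)\ne0$ for all $\omega$ (the sutured counterpart of Theorem~\ref{thm:TwistNorm}).

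It then remains to show that $\underline{SFH}(M,\gamma,\omega;\Lambda)\cong\Lambda$ forces $(M,\gamma)$ to be a product sutured manifold. If it is not, the arguments of \cite{G1} and \cite{NiFibred} furnish a well-groomed decomposing surface $S$, which may be isotoped off $\omega$, along which the decomposition $(M,\gamma)\rightsquigarrow(M',\gamma')$ is taut and for which the direct summand $\underline{SFH}(M',\gamma',\omega;\Lambda)$ of $\underline{SFH}(M,\gamma,\omega;\Lambda)$ supplied by the twisted decomposition theorem is \emph{proper} (not every $\mathrm{Spin}^c$ structure is outer with respect to $S$). Since $(M',\gamma')$ is taut, the non-vanishing just established gives $\underline{SFH}(M',\gamma',\omega;\Lambda)\ne0$, contradicting that it is a proper summand of $\Lambda$. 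Hence $(M,\gamma)\cong F\times[0,1]$, which is exactly the assertion that $K$ fibers over $S^1$ with fiber $F$.

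The main obstacle is the middle step: pushing Juh\'asz's decomposition machinery through over $\Lambda$ with the $1$-cycle $\omega$ held fixed. One must establish the twisted surface decomposition theorem while arranging each decomposing surface to be disjoint from $\omega$ and controlling how the homology class of $\omega$ transforms under cutting, and one must deduce the non-vanishing for taut sutured manifolds from the hierarchy rather than from a taut foliation, so that it is available for the prescribed $\omega$; this is possible precisely because product sutured manifolds are insensitive to the twisting. The purely topological input used in the previous paragraph --- that a non-product taut sutured manifold admits a well-groomed decomposition whose associated summand is proper --- is identical to what is needed in the untwisted proof and should carry over without change.
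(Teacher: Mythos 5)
The paper does not prove this theorem; it is quoted as \cite[Theorem 2.2]{NiClosed} and used as a black box, so there is no in-paper proof to compare against. Your outline nonetheless describes a sutured Floer homology proof close in spirit to the one actually carried out in \cite{NiClosed} (and to the untwisted prototype in \cite{NiFibred}): cut along $F$, run a Gabai hierarchy, and use a twisted decomposition theorem to bound the rank of $\underline{SFH}$ from below unless the sutured manifold is a product. The plan is sound as a blueprint, but two of its central steps are asserted rather than established, and those are precisely where the theorem's difficulty resides.

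First, you assume every decomposing surface in the hierarchy ``may be isotoped off $\omega$.'' After the cut along $F$ the $1$-cycle $\omega$ becomes a collection of closed curves and proper arcs in $M$; a later well-groomed surface $S$ can carry a nonzero homological intersection with these, in which case no isotopy separates them. One must either track how the local coefficient system transforms through a transverse intersection with $S$, or exploit Gabai's freedom to pick the hierarchy surface inside an open cone of $H_2$-classes so as to steer it away from $\omega$; notice that the paper itself treats disjointness from the gluing locus as a \emph{hypothesis} in Lemma~\ref{lem:CutReglue}, not as something available for free. Second, the ``proper summand'' clause is too weak as stated: you need a Spin$^c$ structure that actually supports nonzero $\underline{SFH}(M,\gamma,\omega;\Lambda)$ and is non-outer for $S$, not merely that some Spin$^c$ structure fails to be outer. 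If the twisted Floer homology were concentrated in the outer cone you would simply carry $\underline{SFH}\cong\Lambda$ down the hierarchy with no contradiction. The standard remedy in the untwisted setting (Ni's horizontal decompositions, or Juh\'asz's positive-dimensional sutured polytope) must itself be re-proved over $\Lambda$ for the fixed $\omega$, and that is the genuine content hidden behind the cited statement.
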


\section{Property G}

This section is devoted to the proof of Theorem~\ref{thm:PropG},
which is a direct corollary of the properties listed in the last
section.

\begin{proof}[Proof of Theorem~\ref{thm:PropG}]
We first prove Property G1. If $F$ is a taut Seifert-like surface
for $K$, by Theorem~\ref{thm:TwistGenus} we can find a $1$--cycle
$\omega\subset Y-K$, such that $\omega\cdot S\ne0$ and
$$\underline{\widehat{HFK}}(Y,K,\omega,[F],\frac{x(F)+1}2;\Lambda)\ne0.$$
Now Lemma~\ref{lem:0surgery} implies that
$$\underline{HF^+}(Y_0(K),\omega,[\widehat F],\frac{x(F)-1}2;\Lambda)\ne0,$$
hence $\widehat F$ is taut.

Now we prove Property G2. Suppose $Y_0(K)$ fibers over $S^1$ with
fiber in the homology class $[\widehat F]$, where $F$ is a taut
Seifert-like surface for $K$. By Property G1, $\widehat F$ is taut
in $Y_0(K)$, hence $\widehat F$ is isotopic to a fiber of the
fibration. Choose a $1$--cycle $\omega\subset Y-K$, such that
$\omega\cdot S\ne0,\omega\cdot[\widehat F]\ne0$. Since $Y_0(K)$
fibers over $S^1$, by \cite{AiP} we have
$$\underline{HF^+}(Y_0(K),\omega,[\widehat{F}],g(F)-1;\Lambda)\cong\Lambda.$$
Lemma~\ref{lem:0surgery} then implies that
$$\underline{\widehat{HFK}}(Y,K,\omega,[F],g(F);\Lambda)\cong\Lambda.$$
Using Theorem~\ref{thm:TwistFibre}, we conclude that $K$ is
fibered with fiber $F$.
\end{proof}

\section{Cosmetic surgery}

In this section, we will prove Theorem~\ref{thm:Cosmetic}. Like
\cite[Section~9]{OSzRatSurg}, the proof relies on the rational
surgery formula of Floer homology. However, our situation here is
much simpler. The result we will use is as follows.

\begin{prop}\label{prop:pqSurg}
Let $Y$ be a closed $3$--manifold that contains a non-separating
sphere $S$. $K$ is a null-homologous knot in $Y$, such that $Y-K$
is irreducible. Let $\omega$ be a $1$--cycle in $Y-K$ satisfying
$\omega\cdot S\ne0$. Then there exists a constant
$R=R(Y,K,\omega)$, such that
$$\mathrm{rank}_{\Lambda}\underline{\widehat{HF}}(Y_{\frac pq}(K),\omega;\Lambda)=qR$$
for any $\frac pq\in \mathbb Q$. Here $p,q\in\mathbb Z, q>0,
\gcd(p,q)=1$.
\end{prop}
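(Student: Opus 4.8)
The plan is to combine the vanishing $\underline{\widehat{HF}}(Y,\omega;\Lambda)=0$ supplied by Lemma~\ref{lem:TrivialS2} with the twisted analogue of the rational surgery formula of \cite{OSzRatSurg}. Since $\omega\cdot S\ne 0$, Lemma~\ref{lem:TrivialS2} gives $\underline{\widehat{HF}}(Y,\omega;\Lambda)=0$, and since $\Lambda$ is a field, each summand $\underline{\widehat{CF}}(Y,\mathfrak s,\omega;\Lambda)$ ($\mathfrak s\in\mathrm{Spin}^c(Y)$) is acyclic. Furthermore, if a relative $\mathrm{Spin}^c$ structure $\xi\in\underline{\mathrm{Spin}^c}(Y,K)$ has Alexander grading sufficiently positive (resp.\ negative), then the defining condition $\max\{i,j\}=0$ of $\widehat A_\xi(Y,K,\omega)=\underline C_\xi\{\max\{i,j\}=0\}$ reduces to $i=0$ (resp.\ $j=0$), so by Lemma~\ref{lem:Forget} the complex $\widehat A_\xi(Y,K,\omega)$ is isomorphic to $\underline{\widehat{CF}}(Y,G_{Y,K}(\xi),\omega;\Lambda)$ (resp.\ $\underline{\widehat{CF}}(Y,G_{Y,-K}(\xi),\omega;\Lambda)$), which is acyclic. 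Since $\underline{\widehat{HFK}}(Y,K,\omega)$ is supported in only finitely many $\mathrm{Spin}^c$ structures of $Y$, it follows that $H_*(\widehat A_\xi(Y,K,\omega))\ne 0$ for at most finitely many $\xi$. Hence
$$R:=\sum_{\xi\in\underline{\mathrm{Spin}^c}(Y,K)}\mathrm{rank}_{\Lambda}H_*\bigl(\widehat A_\xi(Y,K,\omega)\bigr)<\infty,$$
a constant depending only on $(Y,K,\omega)$; this will be the $R$ of the statement.

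Next, Propositions~\ref{prop:LargeSurg}, \ref{prop:Kunneth} and \ref{prop:Exact} are the twisted versions of the large surgery formula, the K\"unneth formula for $U$--knots, and the surgery exact triangle --- the three ingredients from which the rational surgery formula is assembled in \cite{OSzRatSurg}. Running that assembly with all coefficients twisted by the fixed $1$--cycle $\omega$ produces, for every rational $p/q$ with $q>0$, an isomorphism
$$\underline{\widehat{HF}}(Y_{p/q}(K),\omega;\Lambda)\cong H_*\Bigl(\mathrm{Cone}\bigl(\mathbb A\xrightarrow{\ D_{p/q}\ }\mathbb B\bigr)\Bigr),$$
in which the ``$A$--part'' $\mathbb A$ has $A$--terms of the form $\widehat A_{\lfloor s/q\rfloor}(Y,K,\omega)$ indexed by $s$, so that each complex $\widehat A_\xi(Y,K,\omega)$ occurs exactly $q$ times and $\mathbb A\cong\bigoplus_{\xi\in\underline{\mathrm{Spin}^c}(Y,K)}\widehat A_\xi(Y,K,\omega)^{\oplus q}$, while the ``$B$--part'' $\mathbb B$ is a direct sum of copies of the complexes $\underline{\widehat{CF}}(Y,\mathfrak s,\omega;\Lambda)$, $\mathfrak s\in\mathrm{Spin}^c(Y)$, coming from the surgery triangle for $Y$. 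By the first paragraph, $\mathbb B$ is acyclic.

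Finally, since $\mathbb B$ is acyclic, the long exact sequence associated to the short exact sequence of chain complexes $0\to\mathbb B\to\mathrm{Cone}(D_{p/q})\to\mathbb A\to 0$ yields $\underline{\widehat{HF}}(Y_{p/q}(K),\omega;\Lambda)\cong H_*(\mathbb A)$. As $H_*(\mathbb A)\cong\bigl(\bigoplus_{\xi}H_*(\widehat A_\xi(Y,K,\omega))\bigr)^{\oplus q}$ and $\Lambda$ is a field,
$$\mathrm{rank}_{\Lambda}\underline{\widehat{HF}}(Y_{p/q}(K),\omega;\Lambda)=q\sum_{\xi\in\underline{\mathrm{Spin}^c}(Y,K)}\mathrm{rank}_{\Lambda}H_*\bigl(\widehat A_\xi(Y,K,\omega)\bigr)=qR,$$
which is the assertion. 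The step demanding genuine care rather than a citation is the finiteness of $R$ in the first paragraph: it is precisely here that the hypothesis $\omega\cdot S\ne 0$ enters --- via Lemma~\ref{lem:TrivialS2}, making both the ``extreme'' complexes $\widehat A_\xi$ and all of the $B$--complexes acyclic --- and this is what makes the present situation simpler than the $L$--space case of \cite{OSzRatSurg}, where those complexes are not acyclic and one must first truncate the mapping cone to a finite subcomplex before reading off ranks.
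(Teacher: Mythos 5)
Your proposal is correct in spirit and reaches the right answer, but it takes a different assembly route from the paper, and the difference is worth pinpointing. You invoke the full twisted rational‑surgery mapping cone $\mathrm{Cone}(D_{p/q}\colon\mathbb A\to\mathbb B)$ as a black box and then simplify: the hypothesis $\omega\cdot S\ne 0$ kills $\mathbb B$ via Lemma~\ref{lem:TrivialS2}, and $\mathbb A$ visibly breaks into $q$ copies of $\bigoplus_\xi\widehat A_\xi(Y,K,\omega)$. The paper never forms the mapping cone. Instead it follows the assembly of \cite[Section~7]{OSzRatSurg} one level down: it realizes $Y_{p/q}(K)$ as an integer Morse surgery on $K'=K\#O_{q/r}\subset Y'=Y\#L(q,r)$; Lemma~\ref{lem:MorseSurg} (proved using exactly the vanishing phenomenon you isolate, plus Proposition~\ref{prop:Exact} and Proposition~\ref{prop:LargeSurg}) identifies $\underline{\widehat{HF}}(Y'_\lambda(K'),\omega;\Lambda)$ with $H_*(\widehat{\mathbb A}(Y',K',\omega))$ for \emph{any} frame, so no $B$--complexes or truncation ever appear; and then Proposition~\ref{prop:Kunneth} together with the $\mathrm{Spin}^c$ decomposition of Lemma~\ref{lem:SpinConn} show $\widehat{\mathbb A}(Y',K',\omega)$ is $q$ copies of $\widehat{\mathbb A}(Y,K,\omega)$. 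Your route is cleaner at the cost of assuming that the twisted mapping cone formula exists in this generality, which the paper never actually sets up; the paper's route uses only the ingredients it has proved (large‑surgery formula, K\"unneth, surgery triangle), discarding the $B$--part one step earlier inside Lemma~\ref{lem:MorseSurg} instead of at the end. Your first paragraph's argument for finiteness of $R$ is essentially the same analysis that appears inside the proof of Lemma~\ref{lem:MorseSurg} in the paper, using Lemma~\ref{lem:Forget} and Lemma~\ref{lem:TrivialS2} to show the extreme $\widehat A_\xi$'s are acyclic.
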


The following lemma is an analogue of
\cite[Theorem~6.1]{OSzRatSurg}.

\begin{lem}\label{lem:MorseSurg}
Let $Y$ be a closed $3$--manifold that contains a non-separating
sphere $S$. $K$ is a rationally null-homologous knot in $Y$,
$\lambda$ is a frame on $K$. Let $\omega$ be a $1$--cycle in $Y-K$
satisfying $\omega\cdot S\ne0$. Let
$$\widehat{\mathbb A}(Y,K,\omega)=\bigoplus_{\xi\in\underline{\mathrm{Spin}^c}(Y,K)}\widehat
A_{\xi}(Y,K,\omega).$$ Then there is an isomorphism
$$\underline{\widehat{HF}}(Y_{\lambda}(K),\omega;\Lambda)\cong H_*(\widehat{\mathbb A}(Y,K,\omega)).$$
\end{lem}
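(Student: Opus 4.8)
The plan is to mimic the proof of \cite[Theorem~6.1]{OSzRatSurg}, simplified by the vanishing phenomenon coming from the non-separating sphere. First I would fix a large integer $m$ and consider the framing $\lambda+m\mu$ together with the link $K' = K \cup K_m$, where $K_m$ is a meridian of $K$ equipped with the framing $-1$; performing the Morse-theoretic surgery formula (or, more concretely, iterating the surgery exact triangle of Proposition~\ref{prop:Exact}) relates $\underline{\widehat{HF}}(Y_\lambda(K),\omega;\Lambda)$ to the homology of a mapping cone built from the groups $\underline{\widehat{CF}}(Y_{m\mu+\lambda}(K),\mathfrak t,\omega;\Lambda)$ over $\mathfrak t \in \mathrm{Spin}^c(Y_{m\mu+\lambda}(K))$. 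By Proposition~\ref{prop:LargeSurg}, for $m$ sufficiently large each of these is identified with $\widehat A_{\Xi(\mathfrak t)}(Y,K,\omega)$, and the maps in the mapping cone are identified with the inclusion-type maps $\widehat v$ and $\widehat h$ of the rational surgery formula. Thus I obtain an isomorphism of $\underline{\widehat{HF}}(Y_\lambda(K),\omega;\Lambda)$ with the homology of the mapping cone $\widehat{\mathbb{X}}(Y,K,\omega)$ assembled from copies of $\widehat A_{\xi}(Y,K,\omega)$ and copies of $\underline{\widehat{CF}}$ of ``large'' surgeries glued in the standard pattern.

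The key simplification, and the point where the hypothesis $\omega\cdot S\ne 0$ enters decisively, is this: the intermediate manifolds appearing in the mapping cone (the ones playing the role of the ``$B$'' groups, i.e.\ the large surgeries on $K$ that fill out the truncated cone) all inherit the non-separating sphere $S$ together with the $1$--cycle $\omega$ satisfying $\omega\cdot S\ne 0$, so by Lemma~\ref{lem:TrivialS2} their twisted Floer homology vanishes. (Concretely: $S$ lives in $Y-\ond(K)$, hence in every surgery on $K$, and $\omega$ is unaffected by surgery on $K$.) Therefore the mapping cone collapses: the contributions of the ``$B$''-type summands die, and what survives is exactly the homology of $\widehat{\mathbb{A}}(Y,K,\omega) = \bigoplus_\xi \widehat A_\xi(Y,K,\omega)$ with zero differential between distinct $\mathrm{Spin}^c$ sectors, i.e.\ $H_*(\widehat{\mathbb{A}}(Y,K,\omega))$.

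I expect the main obstacle to be bookkeeping rather than anything conceptual: one must check that the twisted chain maps in Proposition~\ref{prop:Exact} and the identifications of Proposition~\ref{prop:LargeSurg} assemble into the twisted analogue of the Ozsv\'ath--Szab\'o mapping cone with the correct $\omega$--decorations on each cobordism (the annuli $\omega\times I$ must be tracked consistently across the composition of handle attachments), and that the vanishing from Lemma~\ref{lem:TrivialS2} applies to every vertex of the cone other than the $\widehat{\mathbb{A}}$ summands. Since Proposition~\ref{prop:TwistInv} lets us treat $\omega$ purely as a homology class, and since $S$ and $\omega$ survive all the relevant surgeries with $\omega\cdot S\ne 0$ preserved, this verification is routine; the genuinely new input beyond \cite{OSzRatSurg} is precisely the collapse of the cone, which is immediate once the vanishing is in place.
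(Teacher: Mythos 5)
The paper's own proof does not invoke the mapping cone at all, and you should be aware that it is genuinely more elementary than the route you sketch: it first uses the twisted surgery exact triangle (Proposition~\ref{prop:Exact}) plus the vanishing $\underline{\widehat{HF}}(Y,\omega;\Lambda)=0$ to conclude that $\underline{\widehat{HF}}(Y_{\lambda_1}(K),\omega;\Lambda)\cong\underline{\widehat{HF}}(Y_{\lambda_2}(K),\omega;\Lambda)$ for \emph{any} two integral framings, then applies the large-surgery identification (Proposition~\ref{prop:LargeSurg}) to a single framing $m\mu+\lambda$ with $m\gg0$, and finally argues that the image of $\Xi$ already exhausts all $\xi$ with $\widehat A_\xi\ne0$ (because for the remaining $\xi$ one has $\widehat A_\xi\cong\underline{\widehat{CF}}(Y,\mathfrak r)$ by Lemma~\ref{lem:Forget}, hence acyclic by Lemma~\ref{lem:TrivialS2}), with injectivity of $\Xi$ from Lemma~\ref{lem:XiInj}. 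Your proposal instead reaches for the full twisted integer-surgery mapping cone; this is not established anywhere in the paper (only the twisted triangle and twisted large-surgery theorem are), so it is not something you may quote as known here.

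Beyond that, there are two concrete errors in the sketch. First, your parenthetical ``$S$ lives in $Y-\ond(K)$, hence in every surgery on $K$'' is not among the hypotheses of the lemma, and in the application to Theorem~\ref{thm:Cosmetic} it is actually impossible: there $Y-K$ is irreducible, so any embedded sphere in $Y-K$ bounds a ball, while $S$ is non-separating in $Y$; thus $S$ must meet $K$. Worse, if $S\subset Y-\ond(K)$ \emph{did} hold, then $S$ would persist as a non-separating sphere in $Y_\lambda(K)$ with $\omega\cdot S\ne0$, and Lemma~\ref{lem:TrivialS2} would force the \emph{left} side $\underline{\widehat{HF}}(Y_\lambda(K),\omega;\Lambda)$ to vanish, trivializing the statement. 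Second, you mis-identify the $B$-type groups in the mapping cone: they are $\underline{C}_\xi\{i=0\}$, which by Lemma~\ref{lem:Forget} is $\underline{\widehat{CF}}(Y,\mathfrak s)$ --- not twisted complexes of ``large surgeries on $K$.'' They do vanish, but they vanish because $Y$ itself contains $S$ with $\omega\cdot S\ne0$, not because some surgery inherits $S$. The conclusion you want (the cone collapses to $\widehat{\mathbb A}$) is correct, but not for the reason you give, and it needs the unproved twisted mapping cone to even get started.
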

\begin{proof}
We claim that for any two frames $\lambda_1,\lambda_2$ on $K$,
$$\underline{\widehat{HF}}(Y_{\lambda_1}(K),\omega;\Lambda)\cong\underline{\widehat{HF}}(Y_{\lambda_2}(K),\omega;\Lambda).$$
This claim follows from Proposition~\ref{prop:Exact} and the fact
that
$$\underline{\widehat{HF}}(Y,\omega;\Lambda)=0.$$

By the above claim and Proposition~\ref{prop:LargeSurg}, when $m$
is sufficiently large we have
\begin{equation}\label{eq:oplusA}
\underline{\widehat{HF}}(Y_{\lambda}(K),\omega;\Lambda)\cong\bigoplus_{\mathfrak
t\in\mathrm{Spin}^c(Y_{m\mu+\lambda})} \widehat A_{\Xi(\mathfrak
t)}(Y,K,\omega).
\end{equation}

Recall that $\widehat
A_{\xi}(Y,K,\omega)=\underline{C}_{\xi}\big\{\max\{i,j\}=0\big\}$.
By (\ref{eq:Complex}), $\widehat A_{\xi}(Y,K,\omega)\ne0$ only if
some Spin$^c$--structure $\xi+n\mathrm{PD}[\mu]$ is represented by
an intersection point $\mathbf x\in\mathbb T_{\alpha}\cap\mathbb
T_{\beta}$. Moreover, by (\ref{eq:Complex}) and
Lemma~\ref{lem:Forget}, there exists a constant $N_0$, such that
for any $\mathbf x$, if $|n|>N_0$, then $$\widehat
A_{\underline{\mathfrak s}_{w,z}(\mathbf
x)-n\mathrm{PD}[\mu]}(Y,K,\omega)\cong
\underline{\widehat{CF}}(Y,\omega,\mathfrak r)$$ for some Spin$^c$
structure $\mathfrak r$ depending on $\mathbf x,n$. By
Lemma~\ref{lem:TrivialS2}, the right hand side of the above
equation is $0$.

The analysis in the last paragraph shows that, if $m$ is
sufficiently large, then the image of $\Xi$ contains all the $\xi$
such that $\widehat A_{\xi}(Y,K,\omega)\ne0$. Our desired result
then follows from (\ref{eq:oplusA}) and Lemma~\ref{lem:XiInj}.
\end{proof}

Let $K$ be a null-homologous knot in $Y$. As in \cite[Section
7]{OSzRatSurg}, $Y_{\frac pq}(K)$ can be realized by a Morse
surgery on the knot $K'=K\#O_{q/r}\subset Y'=Y\#L(q,r)$, where
$O_{\frac qr}$ is a $U$--knot in $L(q,r)$.

Suppose $\xi\in\underline{\mathrm{Spin}^c}(Y',K')$, then $\xi$ is
the connected sum of two Spin$^c$--structures
$\xi_1\in\underline{\mathrm{Spin}^c}(Y,K)$ and
$\xi_2\in\underline{\mathrm{Spin}^c}(L(q,r),O_{q/r})$. Let
$\underline{\Pi_1}(\xi)=\xi_1$,
$\Pi_2(\xi)\in\mathrm{Spin}^c(L(q,r))$ be the Spin$^c$--structure
represented by $\xi_2$.

\begin{lem}\label{lem:SpinConn}
There is a bijective map
$$\underline{\Pi_1}\times\Pi_2\co \underline{\mathrm{Spin}^c}(Y',K')\to
\underline{\mathrm{Spin}^c}(Y,K)\times\mathrm{Spin}^c(L(q,r)).$$
\end{lem}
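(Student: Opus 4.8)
The plan is to analyze the relative $\mathrm{Spin}^c$ structures on the connected sum $(Y',K') = (Y\#L(q,r), K\#O_{q/r})$ by comparing the two sides of the asserted bijection. Recall that for a rationally null-homologous knot $K$ in $Y$, the set $\underline{\mathrm{Spin}^c}(Y,K)$ is an affine space over $H^2(Y,K;\mathbb Z)\cong H_1(Y-K;\mathbb Z)$, and similarly for the other spaces. So the first step is to observe that there is a natural identification of affine spaces coming from the Mayer--Vietoris sequence: since $Y'-K'$ is obtained by gluing $Y-\ond(K)$ and $L(q,r)-\ond(O_{q/r})$ along the boundary torus (identifying meridians of $K$ and $O_{q/r}$), one gets a natural isomorphism $H_1(Y'-K';\mathbb Z)\cong H_1(Y-K;\mathbb Z)\oplus H_1(L(q,r)-O_{q/r};\mathbb Z)$ modulo the identified meridian class. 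The key point, which I will make precise, is that the meridian of $O_{q/r}$ is primitive (indeed $H_1(L(q,r)-O_{q/r})\cong\mathbb Z$ generated by the meridian, since $O_{q/r}$ is a $U$-knot, i.e., a core of a Heegaard solid torus), so the identification works cleanly.

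Next I would set up the two maps explicitly. The map $\underline{\Pi_1}$ records the $(Y,K)$-summand and $\Pi_2$ records the underlying (non-relative) $\mathrm{Spin}^c$ structure on $L(q,r)$. The content of the claim is that these together lose no information: the relative $\mathrm{Spin}^c$ structure on $(L(q,r),O_{q/r})$ is \emph{not} fully remembered by $\Pi_2$ (the forgetful map $\underline{\mathrm{Spin}^c}(L(q,r),O_{q/r})\to\mathrm{Spin}^c(L(q,r))$ has fibers an affine $\mathbb Z$, parametrized by the meridian $\mathrm{PD}[\mu]$), but the missing $\mathbb Z$ worth of data on the $L(q,r)$ side is exactly matched by the $\mathbb Z$ worth of meridional data that \emph{is} remembered inside $\underline{\Pi_1}(\xi)\in\underline{\mathrm{Spin}^c}(Y,K)$, because in $Y'-K'$ the two meridians are glued and $[\mu]$ has infinite order in $H_1(Y-K)$ while it vanishes in $H_1(Y)$. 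Counting: $\underline{\mathrm{Spin}^c}(Y',K')$ is affine over $H_1(Y-K)\oplus H_1(L(q,r)-O_{q/r})/(\mu_K=\mu_O)$; the target is affine over $H_1(Y-K)\oplus H^2(L(q,r))$, and $H^2(L(q,r))\cong H_1(L(q,r))\cong \mathbb Z/q$ is the quotient of $H_1(L(q,r)-O_{q/r})\cong\mathbb Z$ by $\langle\mu_O\rangle$ composed with... — so I would check the two groups agree and that $\underline{\Pi_1}\times\Pi_2$ is the affine map over the resulting isomorphism of abelian groups.

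Concretely, the cleanest route is: (i) show both sides are torsors over naturally isomorphic groups via Mayer--Vietoris, using that $O_{q/r}$ is a $U$-knot so its exterior is a solid torus with $H_1$ generated by the meridian; (ii) pick a basepoint on each side — e.g. a relative $\mathrm{Spin}^c$ structure $\xi_1\#\xi_2$ for some reference $\xi_1,\xi_2$ on the source, and its image on the target — and verify $\underline{\Pi_1}\times\Pi_2$ intertwines the two torsor structures, i.e. is equivariant with respect to the group isomorphism from (i); (iii) conclude bijectivity since an equivariant map of torsors over a group isomorphism is automatically a bijection. The map $\underline{\Pi_1}$ is equivariant essentially by naturality of the connected-sum decomposition of relative $\mathrm{Spin}^c$ structures (as already used implicitly before Proposition~\ref{prop:Kunneth}), and $\Pi_2$ is the composition of the second projection with the forgetful map, which is equivariant over the quotient $H_1(L(q,r)-O_{q/r})\to H^2(L(q,r))$.

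I expect the main obstacle to be bookkeeping rather than genuine difficulty: one must be careful about the distinction between $\mathrm{Spin}^c(L(q,r))$ and $\underline{\mathrm{Spin}^c}(L(q,r),O_{q/r})$ and about exactly which meridian classes get identified in the Mayer--Vietoris sequence for $Y'-K' = (Y-\ond K)\cup_T (L(q,r)-\ond O_{q/r})$, so that the "$\mathbb Z$ from the knot exterior in $Y$" and the "$\mathbb Z$ from the knot exterior in $L(q,r)$" are correctly seen to be glued into a single $\mathbb Z$ in the source while splitting as $\mathbb Z \oplus \mathbb Z/q$-type data on the target. Since the analogous (untwisted) statement is exactly \cite[Lemma~7.5]{OSzRatSurg}, I would follow that argument, and the proof amounts to citing or transcribing it; the presence of twisting and of the non-separating sphere plays no role here, as this is a purely topological statement about $\mathrm{Spin}^c$ structures.
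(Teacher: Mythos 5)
There is a genuine gap, and it is located precisely at your steps (i) and (iii). You propose to show that the two $\mathrm{Spin}^c$ sets are torsors over \emph{isomorphic} groups and that $\underline{\Pi_1}\times\Pi_2$ is equivariant over that isomorphism. But the underlying groups $H^2(Y',K')$ and $H^2(Y,K)\oplus H^2(L(q,r))$ are not isomorphic in general, so there is no group isomorphism to be equivariant over. Concretely, take $Y=S^3$: then $Y'=L(q,r)$, $K'$ generates $H_1(L(q,r))\cong\mathbb{Z}/q$, and a direct computation gives $H^2(Y',K')\cong H_1(Y'-K')\cong\mathbb{Z}$, whereas $H^2(Y,K)\oplus H^2(L(q,r))\cong\mathbb{Z}\oplus\mathbb{Z}/q$. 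These are not isomorphic groups for $q>1$, so the automatic-bijectivity principle you invoke in (iii) (an equivariant map of torsors over a group isomorphism is a bijection) does not apply.

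The same error shows up in a self-contradictory pair of statements in your write-up: you assert that $\mu_O$ is primitive in $H_1(L(q,r)-O_{q/r})\cong\mathbb{Z}$, and a few lines later that $H^2(L(q,r))\cong\mathbb{Z}/q$ is the quotient of this $\mathbb{Z}$ by $\langle\mu_O\rangle$. Both cannot hold when $q>1$; in fact $\mu_O$ is $\pm q$ times a generator of $H_1(L(q,r)-O_{q/r})$ (the complement is the other Heegaard solid torus, and the slope bounding in $\ond(O_{q/r})$ wraps $q$ times around its core). If you carry this through your Mayer--Vietoris computation, $H_1(Y'-K')\cong\big(H_1(Y-K)\oplus\mathbb{Z}\big)/(\mu_K-q\cdot\text{gen})$, which does not match $H_1(Y-K)\oplus\mathbb{Z}/q$.

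The correct structure is what the paper exhibits: a natural \emph{short exact sequence} $0\to H^2(Y,K)\to H^2(Y',K')\to H^2(L(q,r))\to 0$ (obtained via Mayer--Vietoris and excision applied to the splitting sphere of $Y'$, using the pair $(Y',K')$ rather than the knot exterior), which in general does \emph{not} split. This is exactly the right bookkeeping for the bijection: $\Pi_2$ is affine over the surjection $H^2(Y',K')\to H^2(L(q,r))$ and hence surjective with fibers torsors over the kernel $H^2(Y,K)$, and $\underline{\Pi_1}$ restricts to a bijection onto $\underline{\mathrm{Spin}^c}(Y,K)$ on each such fiber. The conclusion is a bijection of sets, not an isomorphism of torsors, and that distinction is essential here --- it is not mere bookkeeping. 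Your final paragraph actually comes close to this (you observe the ``single $\mathbb{Z}$'' on the source versus ``$\mathbb{Z}\oplus\mathbb{Z}/q$-type data'' on the target), but then files it under bookkeeping rather than recognizing that this tension forces you to abandon the torsor-isomorphism framework of steps (i)--(iii) and argue via the exact sequence instead.
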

\begin{proof}
It suffices to show that there is a natural short exact sequence
\begin{equation}\label{eq:ShortEx}
0\to H^2(Y,K)\to H^2(Y',K')\to H^2(L(q,r))\to0.
\end{equation}

There is a $2$--sphere which splits $Y'$ into two parts $U_1,U_2$,
where $U_1=Y-B^3$, $U_2=L(q,r)-B^3$. The knot $K'$ is split into
two arcs $L_1\subset U_1,L_2\subset U_2$. Now there is a
Mayer-Vietoris sequence
\[\begin{array}{cccccc}

&&\cdots&\to&H^1(U_2\cup L_1,K')&\to\\
H^2(Y',U_2\cup L_1)&\to& H^2(Y',K')&\to& H^2(U_2\cup L_1,K')&\to\\
H^3(Y',U_2\cup L_1)&\to&\cdots&&&.
\end{array}
\] By the Excision Axiom, we have $$H^*(U_2\cup L_1,K')\cong
H^*(U_2,L_2),\quad H^*(Y',U_2\cup L_1)\cong H^*(Y,K).$$ Moreover,
it is easy to compute $$H^2(U_2,L_2)\cong\mathbb Z_q\cong
H^2(L(q,r)),\quad H^1(U_2,L_2)=0=H^3(Y,K).$$ So we have the
natural exact sequence (\ref{eq:ShortEx}).
\end{proof}

\begin{proof}[Proof of Proposition~\ref{prop:pqSurg}] Using
 Proposition~\ref{prop:Kunneth} and
Lemma~\ref{lem:SpinConn}, we conclude that $\widehat{\mathbb
A}(Y',K',\omega)$ is the direct sum of $q$ copies of
$\widehat{\mathbb A}(Y,K,\omega)$. Hence
$$\mathrm{rank}_{\Lambda}\underline{\widehat{HF}}(Y_{\frac pq}(K),\omega;\Lambda)=q\cdot\mathrm{rank}_{\Lambda}H_*(\widehat
{\mathbb A}(Y,K,\omega))$$ by Lemma~\ref{lem:MorseSurg}.
\end{proof}

Theorem~\ref{thm:Cosmetic} does not directly follow from the
previous two results. The reason is that
$\underline{\widehat{HF}}(Y,\omega;\Lambda)$ is not an invariant
for $Y$: it depends on the choice of $\omega$. However, it is not
hard to overcome this difficulty.

\begin{proof}[Proof of Theorem~\ref{thm:Cosmetic}]
Assume there are two rational numbers
$\frac{p_1}{q_1},\frac{p_2}{q_2}$ satisfying that there is a
homeomorphism
$$f\co Y_{\frac{p_1}{q_1}}\to\pm Y_{\frac{p_2}{q_2}},$$ then
$|p_1|=|p_2|$ for homological reason. If
$\frac{p_1}{q_1}\ne\frac{p_2}{q_2}$, then we can assume
$$0<q_1<q_2.$$

By Lemma~\ref{lem:0surgery} and Theorem~\ref{thm:TwistGenus},
there exists a non-empty set $U\subset H_1(Y;\mathbb R)$, such
that for any $1$--cycle $\omega\subset Y-K$ representing an
element in $U$, one has
$\underline{HF^+}(Y_0,\omega;\Lambda)\ne0$. Thus
$\underline{\widehat{HF}}(Y_0,\omega;\Lambda)\ne0$ as argued in
the proof of Theorem~\ref{thm:TwistNorm}.

Since $K$ is null-homologous, we can identify $H_1(Y;\mathbb R)$
with $H_1(Y_r;\mathbb R)$ for any $r\in\mathbb Q-\{0\}$. Let
$\mathcal V$ be the subspace of $H_1(Y;\mathbb R)$ defined by the
equation $x\cdot S=0$. Choose an $$\omega\in
U\backslash\cup_{n\in\mathbb Z}f^n_*(\mathcal V),$$ then
$(f^n_*\omega)\cdot S\ne0$ for any $n\in\mathbb Z$. By
Proposition~\ref{prop:pqSurg},
$$
\mathrm{rank}_{\Lambda}\underline{\widehat{HF}}(Y_{\frac{p_1}{q_1}},f^n_*\omega;\Lambda)=
\frac{q_1}{q_2}\mathrm{rank}_{\Lambda}\underline{\widehat{HF}}(Y_{\frac{p_2}{q_2}},f^n_*\omega;\Lambda)
$$
for any $n\in\mathbb Z$. Moreover, since $f\co
Y_{\frac{p_1}{q_1}}\to\pm Y_{\frac{p_2}{q_2}}$ is a homeomorphism,
using Proposition~\ref{prop:OrRev} if necessary, we have
$$
\mathrm{rank}_{\Lambda}\underline{\widehat{HF}}(Y_{\frac{p_1}{q_1}},f^n_*\omega;\Lambda)=
\mathrm{rank}_{\Lambda}\underline{\widehat{HF}}(Y_{\frac{p_2}{q_2}},f^{n+1}_*\omega;\Lambda).
$$
Thus we get
$$\mathrm{rank}_{\Lambda}\underline{\widehat{HF}}(Y_{\frac{p_1}{q_1}},f^n_*\omega;\Lambda)=\left(\frac{q_1}{q_2}\right)^n
\mathrm{rank}_{\Lambda}\underline{\widehat{HF}}(Y_{\frac{p_1}{q_1}},\omega;\Lambda).$$
By Proposition~\ref{prop:pqSurg},
$$\mathrm{rank}_{\Lambda}\underline{\widehat{HF}}(Y_{\frac{p_1}{q_1}},\omega;\Lambda)=
q_1\mathrm{rank}_{\Lambda}\underline{\widehat{HF}}(Y_0,\omega;\Lambda)\ne0,$$
so
$0<\mathrm{rank}_{\Lambda}\underline{\widehat{HF}}(Y_{\frac{p_1}{q_1}},f^n_*\omega;\Lambda)<1$
 when $n$ is sufficiently large, a contradiction.
\end{proof}

\end{document}